\newcommand{\ad}{\mathrm{ad}}
\newcommand{\Lf}{\mathrm{L}}
\newcommand{\Q}{\mathrm{Q}}
\newcommand{\esp}{\quad\mbox{and}\quad}
\newcommand{\R}{\mathbb{R}}
\newcommand{\K}{\mathbb{K}}
\newcommand{\ita}{\textit}
\newcommand{\we}{\wedge}
\newcommand{\Der}{\mathrm{Der}}
\newcommand{\G}{{\mathfrak{g}}}
\newcommand{\N}{{\mathfrak{n}}}
\newcommand{\h}{{\mathfrak{h}}}
\newcommand{\B}{{\mathfrak{B}}}
\newcommand{\om}{\omega}
\newcommand{\e}{\check{e}}
\newtheorem{theo}{Theorem}
\newtheorem{pr}{Proposition}
\newtheorem{Le}{Lemma}
\newtheorem{co}{Corollary}
\newtheorem{remark}{Remark}
\begin{document}
\begin{frontmatter}
\title{On solvable complete symplectic Lie algebras}
\author{M. Benyoussef,  M. W. Mansouri , and SM. Sbai}
\address{University of Ibn Tofail\\ Faculty of Sciences.
        Laboratory L.A.G.A\\ Kenitra-Morocco\\e-mail:
        meryem.benyoussef@uit.ac.ma
		\\ mansourimohammed.wadia@uit.ac.ma
				\\sbaisimo@netcourrier.com}				
\begin{abstract}
In this paper, we are interested in solvable complete Lie algebras, over the field $\K=\R$ or $\mathbb{C}$, which admit  a symplectic structure. Specifically, important classes are studied, and a description of complete Lie Algebra with the dimension of nilradical less or equal than six, which supported symplectic structure is given. 

\end{abstract}

\begin{keyword}
 Symplectic Lie algebras,  solvable algebras, Complete Lie algebra.\\
 \MSC  17B30, 17B60, 17B05, 53D05, 16W25.
\end{keyword}
        \end{frontmatter}
\section{Introduction}
A finite dimensional Lie algebra $\G$ is called a \ita{complete Lie algebra} if its center
$C(\G)$ is trivial and all of its derivations are inner, i.e. $\Der(\G) =\ad(\G)$. This definition of complete Lie algebra was given by N. Jacobson in 1962 \cite{J}. It is well known that semisimple Lie algebras, Borel subalgebras and parabolic subalgebras of  semisimple Lie algebras are complete Lie algebras. A general theory on complete Lie algebras has been developed by D. J. Meng and S. P. Wang in a series of papers \cite{M1},\cite{M2},\cite{M3},\cite{M4} and \cite{M-W}.

A \ita{symplectic Lie algebra} $(\G,\omega)$ is a Lie algebra with a skew-symmetric non-degenerate bilinear form 
$\omega$ such that for any $x,y,z\in\G$,
\begin{equation}\label{cocy}
\mathrm{d}\om(x, y, z):=\om([x, y], z)+\om([y, z], x)+\om([z, x],y) = 0,
\end{equation}
this is to say, $\omega$ is a non-degenerate $2$-cocycle for the scalar cohomology of $\G$. We call $\omega$ a symplectic form of $\G$.  Note that in such case, $\G$ must be of even dimension. A fundamental example of symplectic Lie algebras are the Frobenius Lie algebras,  i.e. Lie algebras admitting a non-degenerate exact 2-form. To our knowledge, the classification of symplectic Lie algebras, up to sympectomorphism, only exist for dimensions less than four  \cite{O} and  six-dimensional   nilpotent symplectic Lie algebras (see \cite{K-G-M} and  \cite{F} for a more recent list). A classication of a large subfamily of six-dimensional non-nilpotent solvable Lie algebras has been made by Stursberg \cite{S}. Some other special higher dimensional cases can be found, for example symplectic  filiform Lie algebras up to dimension ten \cite{G-J-K} and for the non-solvable case \cite{A-M}.

The study of symplectic Lie algebras is an active area of research.
The characterization problem of symplectic Lie algebras is still an
open problem, even though there are many interesting results on
obstructions on a Lie algebra to support a symplectic structure. Let
us recall the following well-known results  \cite{C}.
\begin{enumerate}
	\item A semisimple Lie algebra (in particular if $[\G, \G] = \G$) does
	not admit symplectic structures.
	\item The direct sum of semisimple and solvable Lie algebras cannot
	be symplectic.
	\item Unimodular symplectic Lie algebras are solvable.
	\item All symplectic Lie algebras of dimension four are solvable.
\end{enumerate}

In the present paper, we examine symplectic structures within the framework of complete Lie algebras.

The paper is organized as follows: In Section 2, we recapitulate specific findings concerning solvable complete Lie algebras extensively documented in existing literature. In Section 3, due to the complexity inherent in investigating solvable complete symplectic Lie algebras in their entirety, we opt to focus on a subclass characterized by maximal rank. Initially, our examination centers on the symplectic properties of complete solvable Lie algebras with commutative nilradicals. Subsequently, we delve into the study of complete solvable Lie algebras with filiform nilradicals. Finally, In Section 4, we determine the symplectic structure for complete Lie algebra, if present, for various nilradicals less or equal than six.

\textit{Notations}:  For $\{e_i\}_{1\leq i\leq n}$  a basis of $\G$,
we denote by $\{e^i\}_{1\leq i\leq n}$ the
dual basis on $\G^\ast$ and  $e^{i,j}$  the 2-form $e^i\wedge
e^j\in\wedge^2\G^*$. Set by  $\langle F \rangle:= \mathrm{span}_\K\{F\}$ the Lie
subalgebra  generated by the family $F$.

\section{Solvable complete Lie algebras}
In this section we resume certain results about   complete Lie algebras widely documented in the literature.

Let $\N$ be a nilpotent Lie algebra and $\h$ an abelian subalgebra of $\Der(\N)$. If all elements of $\h$ are semisimple linear transformations of $\N$, then $\h$ is called a torus on $\N$. Suppose $\h$ is a torus on $\N$, clearly $\N$ is decomposed into a direct sum of root spaces for $\h$:
\[\N=\oplus_{\beta\in\h^*}\N_\beta,\]
where $\h^*$ is the dual space of the vector space $\h$ and $\N_\beta= \{x\in\N: h.x = \beta(h)x,
h\in\h\}.$

Let $\h$ be a maximal torus on $\N$. One calls $\h$-msg a minimal system of generators which consists of root vectors for $\h$.
\begin{Le}\cite{Sa}
	Let $\h$ be a maximal torus on $\N$, $\{x_1,..., x_n\}$  an $\h$-msg and $\{\beta_1,...,\beta_n\}$
	the corresponding roots, then $\{\beta_1,...,\beta_n\}$ is a basis for the vector space $\h^*$.
\end{Le}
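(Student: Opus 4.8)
The statement asks for two things at once: that the roots $\beta_1,\dots,\beta_n$ span $\h^\ast$, and that they are linearly independent. Since $\dim\h^\ast=\dim\h$, a spanning family of size $n$ is a basis exactly when $n=\dim\h$. So my plan is to prove spanning first, which immediately gives $\dim\h\le n$, and then to reduce the whole problem to the single equality $n=\dim\h$.

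For the spanning I would use that $\{x_1,\dots,x_n\}$ generates $\N$: every element of $\N$ is a linear combination of iterated brackets of the $x_i$, and since $x_i\in\N_{\beta_i}$ and $[\N_\beta,\N_{\beta'}]\subseteq\N_{\beta+\beta'}$, each nonzero root $\beta$ occurring in $\N=\oplus_\beta\N_\beta$ is a non-negative integral combination $\beta=\sum_i m_i\beta_i$. Hence if $h\in\h$ satisfies $\beta_i(h)=0$ for every $i$, then $\beta(h)=0$ for every root, so $h$ acts as the zero derivation on $\N$; as $\h\subseteq\Der(\N)$, this forces $h=0$. Thus the common kernel of the $\beta_i$ is trivial, i.e. $\{\beta_1,\dots,\beta_n\}$ spans $\h^\ast$ and the evaluation map $\phi\colon\h\to\K^n$, $\phi(h)=(\beta_1(h),\dots,\beta_n(h))$, is injective.

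It then remains to show $\phi$ is onto, and here I would argue by contraposition using maximality: assuming a nontrivial relation $\sum_i c_i\beta_i=0$ (equivalently $\phi(\h)\subsetneq\K^n$), I would manufacture a torus strictly containing $\h$. The clean reduction is through the centralizer $\mathfrak{z}=\{D\in\Der(\N):[D,h]=0\ \forall h\in\h\}$, which is precisely the space of weight-zero derivations, i.e. those preserving every $\N_\beta$. Because $\Der(\N)$ is algebraic, the semisimple and nilpotent Jordan parts of a $D\in\mathfrak{z}$ are again derivations lying in $\mathfrak{z}$; and if some semisimple $D\in\mathfrak{z}\setminus\h$ existed, then $\h\oplus\K D$ would be an abelian algebra of commuting semisimple derivations, hence a torus properly containing $\h$. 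So maximality of $\h$ is equivalent to the assertion that every semisimple weight-zero derivation already lies in $\h$, and it suffices to build one outside $\h$ out of the relation $\sum_i c_i\beta_i=0$.

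The main obstacle is exactly this construction. On $V=\N/[\N,\N]$ the torus $\h$ acts faithfully and diagonally in the eigenbasis $\bar x_1,\dots,\bar x_n$ with weights $\beta_i$, and $\phi(\h)$ is the annihilator of the relation space; the relation signals a genuine degeneracy of this diagonal action. The naive attempt is to pick $\lambda\in\K^n$ with $\sum_i c_i\lambda_i\neq 0$ (so $\lambda\notin\phi(\h)$), declare the diagonal scaling $x_i\mapsto\lambda_i x_i$ on generators, and try to descend it to a derivation of $\N$ layer by layer along $\N\supseteq[\N,\N]\supseteq\cdots$. The relations of $\N$ are homogeneous for the $\h$-weight grading, so any two related monomials carry the same weight $\sum_j\beta_{i_j}$; the difficulty is that they need \emph{not} be homogeneous for the new $\lambda$-grading, so this diagonal descent can be obstructed. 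The honest resolution is that the required semisimple derivation is generally \emph{not} diagonal on the generators: the prototype is a rotation or permutation of generators sharing a common weight (as forced by coincidences among the $\beta_i$), which is semisimple, commutes with $\h$, and enlarges it. Proving that a nontrivial relation always yields such a genuine semisimple weight-zero derivation is the technical heart of the argument (and the real content of the cited result); once it is produced, maximality is contradicted, no relation can exist, and $\{\beta_1,\dots,\beta_n\}$ is a basis of $\h^\ast$. I would note finally that the degenerate case $\h=0$ (a characteristically nilpotent $\N$) must be excluded, as is implicit in taking $\h$ a nonzero maximal torus.
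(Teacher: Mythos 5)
The paper offers no proof of this lemma at all --- it is quoted from \cite{Sa} --- so there is nothing internal to compare your argument against; I can only judge it on its own terms. Your first half is correct and is the standard argument: since the $x_i$ generate $\N$, every root of $\N=\oplus_\beta\N_\beta$ is a non-negative integral combination of the $\beta_i$, so an $h\in\h$ annihilated by all $\beta_i$ acts trivially on $\N$ and hence vanishes, which shows that $\{\beta_1,\dots,\beta_n\}$ spans $\h^*$ and gives $\dim\h\le n=\dim(\N/[\N,\N])$ (this is exactly the content of the next lemma in the paper).

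The second half, however, contains a genuine and in fact unfillable gap, which you partly sense but misdiagnose. You try to show $n=\dim\h$ by extracting, from any linear relation $\sum_i c_i\beta_i=0$, a semisimple weight-zero derivation outside $\h$, contradicting maximality. No such construction can exist, because the independence claim is simply false for a general nilpotent $\N$: maximality of the torus does not force $\dim\h=\dim(\N/[\N,\N])$. The paper's own tables provide counterexamples, e.g.\ $\N_{5,6}$, where $[\N,\N]=\langle e_1,e_2,e_3\rangle$, so an $\h$-msg has $n=2$ elements, while the maximal torus is one-dimensional; the two roots $\frac{2}{5}e^6$ and $\frac{1}{5}e^6$ span $\h^*$ but are obviously dependent. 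The resolution is that the lemma as transcribed here has lost a hypothesis: in \cite{Sa} the general statement is that the roots of an $\h$-msg form a \emph{generating system} of $\h^*$, and they form a basis precisely when $\N$ has maximal rank, i.e.\ $\dim\h=n$ --- in which case your spanning argument plus a count of cardinalities finishes the proof with no need for the problematic construction. So the correct fix is not to complete your centralizer argument but to add the maximal-rank hypothesis (which is the setting in which the paper actually uses the lemma) and then observe that a spanning set of cardinality $\dim\h^*$ is automatically a basis.
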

\begin{Le}\cite{Sa}
	Let $\N$ be a nilpotent Lie algebra and $\h_1$ , $\h_2$ two maximal tori on $\N$. Then
	\begin{equation*}
	\dim\h_1 =\dim\h_2 \leq\dim (\N/[\N,\N]).
	\end{equation*}	
\end{Le}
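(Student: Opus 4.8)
The plan is to treat the two assertions separately: the upper bound $\dim\h\leq\dim(\N/[\N,\N])$ follows quickly from the preceding lemma, while the equality $\dim\h_1=\dim\h_2$ rests on the conjugacy of maximal tori.

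First I would establish the bound. Recall that a family $F\subseteq\N$ generates $\N$ as a Lie algebra if and only if its image spans $\N/[\N,\N]$: indeed, if the subalgebra $\langle F \rangle$ satisfies $\langle F \rangle+[\N,\N]=\N$, then bracketing repeatedly and invoking the nilpotency of $\N$ forces $\langle F \rangle=\N$. Consequently every minimal system of generators of $\N$ projects onto a basis of $\N/[\N,\N]$, and hence has exactly $n:=\dim(\N/[\N,\N])$ elements. Given a maximal torus $\h$, the subspace $[\N,\N]$ is $\h$-invariant, so $\h$ acts on $\N/[\N,\N]$, and one may produce an $\h$-msg $\{x_1,\dots,x_n\}$ by lifting a basis of eigenvectors of this induced action. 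By the previous lemma the associated roots $\{\beta_1,\dots,\beta_n\}$ form a basis of $\h^*$; since these $n$ roots in particular span $\h^*$, we obtain $\dim\h=\dim\h^*\leq n=\dim(\N/[\N,\N])$.

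Next I would prove $\dim\h_1=\dim\h_2$. The idea is to regard $\Der(\N)$ as the Lie algebra of the linear algebraic group $\mathrm{Aut}(\N)$ over $\K$. Since a torus on $\N$ is an abelian subalgebra of commuting semisimple derivations, it integrates to a diagonalizable algebraic subgroup of $\mathrm{Aut}(\N)$, and a maximal torus on $\N$ corresponds precisely to a maximal algebraic torus of $\mathrm{Aut}(\N)$. By Mostow's conjugacy theorem for linear algebraic groups in characteristic zero, any two maximal tori are conjugate by an element of the group; and conjugation by an automorphism of $\N$ carries a torus on $\N$ to a torus on $\N$ of the same dimension. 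Hence $\h_1$ and $\h_2$, being conjugate, satisfy $\dim\h_1=\dim\h_2$.

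The main obstacle is the equality, and within it the passage from the Lie-algebra notion of a torus to the algebraic-group notion: one must verify that being maximal as an abelian subalgebra of semisimple derivations matches being maximal as an algebraic torus, so that Mostow's theorem genuinely applies to the objects at hand. One must also check that the argument is insensitive to the choice $\K=\R$ or $\mathbb{C}$; over $\R$ one may complexify, noting that complexification preserves the relevant dimensions and sends maximal tori to maximal tori. The bound, by contrast, is essentially immediate once the previous lemma is in hand.
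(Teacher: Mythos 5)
The paper offers no proof of this lemma: it is quoted verbatim from Santharoubane \cite{Sa}, and the only internal trace of an argument is the sentence immediately following it, which invokes the mutual conjugacy of maximal tori to conclude that the rank is well defined. Your proposal reconstructs exactly that intended route -- the inequality from Lemma~1 (an $\h$-msg has $\dim(\N/[\N,\N])$ elements and its roots span $\h^*$, so $\dim\h^*\leq\dim(\N/[\N,\N])$; note that the roots may repeat, so reading Lemma~1 as ``the roots span'' rather than literally ``form a basis of cardinality $n$'' is the right and safe reading), and the equality from Mostow-type conjugacy of maximal tori in $\mathrm{Aut}(\N)$ -- so in substance you are supplying the proof the paper delegates to the literature, and the first half is complete and correct.

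The one genuine soft spot is the step you yourself flag: over $\K=\R$ the conjugacy statement is delicate. If ``torus'' only requires the elements of $\h$ to be semisimple (diagonalizable over $\overline{\K}$), then maximal tori need \emph{not} be conjugate over $\R$: already for $\N=\R^2$ abelian, $\Der(\N)=\mathfrak{gl}_2(\R)$ contains the diagonal subalgebra and the subalgebra spanned by the identity and a rotation generator, both maximal tori, not conjugate (their elements have different eigenvalue types), though of equal dimension. So either one adopts the paper's implicit convention that tori are split (act diagonalizably over $\K$, which is what the root-space decomposition $\N=\oplus_\beta\N_\beta$ and the existence of an $\h$-msg already require), in which case conjugacy of maximal split tori does hold and your argument closes; or one argues via complexification, in which case the assertion that $\h\otimes_\R\mathbb{C}$ is again a \emph{maximal} torus of $\Der(\N)\otimes_\R\mathbb{C}$ is precisely the point that needs proof and that you currently only assert. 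As written, your second paragraph proves the equality cleanly for $\K=\mathbb{C}$ and leaves a gap for $\K=\R$ that should be patched by one of these two routes.
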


As all maximal tori on $\N$ are mutually conjugated, so the dimension of amaximal torus on $\N$ is an invariant of $\N$ called the rank of $\N$ (denoted by $rank(\N)$). A nilpotent Lie algebra is called maximal rank nilpotent Lie algebra if $rank(\N )=\dim (\N/[\N,\N]$. 
If $\h$ is a maximal torus on a nilpotent Lie algebra $\N$, define the bracket in $\h\oplus\N$ , by $[h_1 + n_1 , h_2 + n_2 ]=h_1(n_2)- h_2(n_1)+[n_1,n_2]$, where $h_i\in\h$, $n_i\in\N$, $i = 1, 2$, then $(\h\oplus\N,[.,.])$ is a solvable Lie algebra which will be denoted by $\h\ltimes\N$.
\begin{theo}\cite{M-S}
If $\G$ is a complete solvable Lie algebra, then it decomposes as $\h\ltimes\N$, where $\N$ is the nilradical and $\h$ is a subalgebra isomorphic to a maximal torus of $\N$. Moreover, $\h$ is a Cartan subalgebra of $\G$.
\end{theo}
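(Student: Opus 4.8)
The plan is to prove the theorem by fixing a Cartan subalgebra $\h$ of $\G$ and showing that completeness forces $\h$ to be a maximal torus of $\N$ realized as a complement of $\N$. Throughout I would use that, $\G$ being solvable, $[\G,\G]\subseteq\N$, so $\G/\N$ is abelian and every element of $\N$ is $\ad$-nilpotent on $\G$. First I would record the abstract Jordan decomposition available in any complete Lie algebra: for $x\in\G$ the semisimple and nilpotent parts $(\ad x)_s,(\ad x)_n\in\mathrm{End}(\G)$ are again derivations (characteristic zero), hence inner by hypothesis, so $(\ad x)_s=\ad x_s$ and $(\ad x)_n=\ad x_n$ for unique $x_s,x_n\in\G$ (uniqueness from $C(\G)=0$). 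This yields $x=x_s+x_n$ with $[x_s,x_n]=0$.

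Next I would use the generalized weight space decomposition $\G=\h\oplus\G_1$ with $\G_1=\bigoplus_{\alpha\neq0}\G_\alpha$ and $\h=\G_0$, valid since $\h$ is nilpotent and self-normalizing. As $\ad h$ acts on $\G_\alpha$ as $\alpha(h)$ plus a nilpotent, choosing $h$ with $\alpha(h)\neq0$ shows $\ad h$ is invertible there, whence $\G_1=[\h,\G_1]\subseteq[\G,\G]\subseteq\N$; since $\N$ is $\ad(\h)$-stable this gives $\N=M\oplus\G_1$ with $M:=\h\cap\N$. Because $\ad x$ preserves the subalgebra $\h$ and $(\ad x)_s,(\ad x)_n$ are polynomials in $\ad x$, the parts $x_s,x_n$ lie in $N_\G(\h)=\h$, so $\h$ is stable under the Jordan decomposition. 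If $x\in\h$ has $\ad x$ semisimple, then $\ad x|_\h$ is simultaneously semisimple and, by nilpotency of $\h$, nilpotent, hence zero; thus the $\ad$-semisimple elements $\h_s$ lie in $C(\h)$ and form an abelian subalgebra all of whose elements are $\ad$-semisimple. One then checks that $M$ is exactly the set of nilpotent parts $x_n$ (an $\ad$-nilpotent element of $\h$ generates a nilpotent ideal, hence lies in $\N$), that $M$ is an ideal of $\h$, and that $\h=\h_s\oplus M$ with $[\h_s,\h]=0$.

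The heart of the argument, and the step I expect to be the main obstacle, is to show that completeness forces $M=0$ and that the resulting torus is maximal. If $M\neq0$, or if $T:=\ad(\h_s)|_\N$ is not a maximal torus on $\N$, the plan is to manufacture an outer derivation of $\G$: a nonzero semisimple derivation $\delta$ of $\N$ commuting with $\ad(\h)|_\N$ but not of the form $\ad x|_\N$, extended to $\G=\h\oplus\G_1$ by $\delta|_\h=0$. The commutation relation $[\delta,\ad h]=0$ makes the extension a derivation, and it cannot be inner since an inner derivation vanishing on $\h$ would arise from $C_\G(\h)=\h$ and hence restrict to $T$ on $\N$; this contradicts $\Der(\G)=\ad(\G)$. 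Excluding both possibilities is exactly where the hypothesis that all derivations are inner is used, and producing the extra semisimple derivation from a nonzero $M$ and verifying its outerness is the delicate point.

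Granting $M=0$, the conclusion follows quickly. Then $\h=\h_s$ is abelian with all elements $\ad$-semisimple, $\h\cap\N=0$ (an element that is both $\ad$-semisimple and $\ad$-nilpotent is central, hence zero), and $\N=\G_1$, so $\G=\h\oplus\N$ with $[\h,\h]=0$, $[\h,\N]\subseteq\N$ and $[\N,\N]\subseteq\N$; that is, $\G=\h\ltimes\N$. The restriction $h\mapsto\ad h|_\N$ is injective, its kernel being $\h\cap C_\G(\N)\subseteq\h\cap\N=0$ (using $C_\G(\N)\subseteq\N$ for the centerless solvable algebra $\G$), so $\h$ is isomorphic to the maximal torus $\ad(\h)|_\N$ of $\N$. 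Finally $\h$ is a Cartan subalgebra by construction, which completes the proof.
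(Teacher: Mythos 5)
The paper does not prove this statement; it is quoted from Meng--Sheng \cite{M-S} without proof, so your attempt can only be judged on its own merits. Your preparatory work is sound: the abstract Jordan decomposition in a complete Lie algebra, the Fitting decomposition $\G=\h\oplus\G_1$ with $\G_1\subseteq[\G,\G]\subseteq\N$, the stability of $\h$ under $x\mapsto x_s,x_n$, the identification of $M=\h\cap\N$ with the nilpotent parts, and the splitting $\h=\h_s\oplus M$ are all correct, as is the endgame once $M=0$ is granted.

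The genuine gap is exactly where you flag it: you never show $M=0$, and the mechanism you propose for it is not the right one. If $\ad(\h_s)|_\N$ already happens to be a maximal torus, there is no additional semisimple derivation of $\N$ commuting with it for you to extend, so a nonzero $M$ does not by itself ``manufacture'' an outer derivation; your two cases are not parallel. What actually closes the argument is the combinatorial fact behind the paper's Lemma 1 (Santharoubane): for a maximal torus on a nilpotent Lie algebra the roots of an $\h$-msg are linearly independent and nonzero, hence every root of $\N$ is a nonempty nonnegative-integer combination of them and $0$ is not a root, i.e.\ $C_\N(\ad\h_s)=0$. Since $M\subseteq\h\cap\N\subseteq C_\G(\h_s)\cap\N=C_\N(\ad\h_s)$, maximality of the torus forces $M=0$. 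Your outer-derivation construction is the correct tool only for the maximality half: extend $\ad(\h_s)|_\N$ to a maximal torus, pick $\delta$ outside it, extend by $\delta|_{\h_s}=0$ (note that your version, $\delta|_\h=0$ on $\G=\h\oplus\G_1$, is inconsistent with $\delta\in\Der(\N)$ on $M=\h\cap\N$ unless $\delta|_M=0$, another reason the construction must be run with $\h_s$ rather than $\h$), and then use the Jordan decomposition of $\ad x|_\N=\ad t_0|_\N+\ad n_0|_\N$ to force $\delta=\ad t_0|_\N$, a contradiction. So the proof is completable, but only after importing the zero-root lemma for maximal tori, which is the missing idea in your write-up.
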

\begin{theo}\cite{M-S}
	Let $\N$ be nilpotent Lie algebra of maximal rank and
	$\h$ be a maximal torus on $\N$. Then
	\begin{equation*}
	\G=\h\ltimes \N
	\end{equation*}
	is complete Lie algebra.
\end{theo}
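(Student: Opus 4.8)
The plan is to verify the two defining properties of a complete Lie algebra for $\G=\h\ltimes\N$: that the center $C(\G)$ is trivial and that every derivation is inner. The one structural fact that drives everything is Lemma 1 combined with the maximal rank hypothesis: if $\{x_1,\dots,x_n\}$ is an $\h$-msg with roots $\{\beta_1,\dots,\beta_n\}$, then $n=\dim(\N/[\N,\N])=\mathrm{rank}(\N)=\dim\h$ and the $\beta_i$ form a genuine basis of $\h^*$. I would first extract two consequences. Since $\N$ is generated by the root vectors $x_i$ and bracketing adds roots, every root of $\h$ on $\N$ is a combination $\sum m_i\beta_i$ with $m_i\ge 0$ and $\sum m_i\ge 1$; linear independence of the $\beta_i$ then forces $\N_0=0$ and, because $\beta_i$ is realized only with total length one, forces each root space $\N_{\beta_i}$ to be one-dimensional, spanned by $x_i$.

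For the center I take $z=h+n\in C(\G)$ and bracket against $\h$: with the given bracket, $[z,h']=-h'(n)$ for $h'\in\h$, which vanishes for all $h'$ exactly when $n\in\N_0=0$. Then $z=h$ and $[h,\N]=0$ means $\beta(h)=0$ for every root $\beta$, in particular $\beta_i(h)=0$ for all $i$; since the $\beta_i$ span $\h^*$ this gives $h=0$, so $C(\G)=0$.

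The core is $\Der(\G)=\ad(\G)$, and the organizing idea is that each $\ad(h)$ is semisimple on $\G$ (it is $0$ on $\h$ and $\beta(h)\,\mathrm{id}$ on $\N_\beta$), so $\h$ acts semisimply on $\mathrm{End}(\G)$ by $D\mapsto[\ad(h),D]$ and I can decompose $\Der(\G)=\oplus_\gamma\Der(\G)_\gamma$ into weight spaces. As the weight spaces of $\G$ are $\G_0=\h$ and $\G_\beta=\N_\beta$, a weight-$\gamma$ derivation sends $\h\to\N_\gamma$ and $\N_\beta\to\N_{\beta+\gamma}$, while $\ad(\h)$ sits in weight $0$ and $\ad(\N_\gamma)$ in weight $\gamma$. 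For $\gamma=0$: commuting with every $\ad(h)$ forces $D$ to preserve each root space and $\h$; testing the derivation identity on $[h,x_i]=\beta_i(h)x_i$ gives $\beta_i(D(h))=0$ for all $i$, hence $D|_\h=0$, and then $D(x_i)=c_ix_i$ by one-dimensionality of $\N_{\beta_i}$. Choosing the unique $h\in\h$ with $\beta_i(h)=c_i$, the derivation $D-\ad(h)$ annihilates $\h$ and all generators, hence all of $\N$, so $D=\ad(h)$. For $\gamma\neq0$: abelianness of $\h$ yields $\gamma(h_1)D(h_2)=\gamma(h_2)D(h_1)$, so $D(h)=\gamma(h)v$ for a fixed $v\in\N_\gamma$; subtracting $\ad(-v)$ reduces to $D|_\h=0$, whereupon the identity on $[h,x_i]$ gives $\gamma(h)D(x_i)=0$ and thus $D=0$. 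Summing over weights gives $\Der(\G)=\ad(\G)$.

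The step I expect to demand the most care is the weight-zero analysis, since that is where maximal rank enters essentially: passing from the prescribed eigenvalues $c_i$ on the generators to an actual torus element $h$ realizing them relies on $\{\beta_i\}$ being a true basis of $\h^*$, not merely a spanning set. I would also isolate and prove once, by induction on bracket length, the recurring lemma that a derivation annihilating a generating set of $\N$ annihilates all of $\N$, as it is invoked in both the $\gamma=0$ and $\gamma\neq0$ cases.
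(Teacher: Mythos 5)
This theorem is quoted in the paper from \cite{M-S} without proof, so there is no in-text argument to compare yours against. Your proof is correct and is essentially the standard argument from that literature: triviality of the center via $\N_0=0$ together with the spanning of $\h^*$ by the roots, then the decomposition of $\Der(\G)$ into weight spaces for the semisimple action $D\mapsto[\ad(h),D]$, with the maximal-rank hypothesis entering exactly where you say it does --- in realizing prescribed eigenvalues $c_i$ on the one-dimensional spaces $\N_{\beta_i}=\K x_i$ by an element of $\h$, which requires $\{\beta_i\}$ to be a basis of $\h^*$ and not merely a spanning set. The only hypothesis you use silently is that the torus is split over $\K$, so that the root-space decompositions of $\G$ and of $\mathrm{End}(\G)$ are defined over $\K$; the paper makes the same implicit assumption when it writes $\N=\oplus_{\beta\in\h^*}\N_\beta$, so your argument is consistent with its conventions.
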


\begin{theo}\cite{M-S}
	  Let  $\G_i=\h_i\ltimes\N_i$,  $i = 1, 2 $ be  two solvable complete  Lie algebras  with  nilpotent radical $\N_i,i  = 1, 2.$  Then  $\G_1$   is isomorphic  to  $\G_2$  if and only if $\N_1$  is isomorphic to $\N_2$. 
\end{theo}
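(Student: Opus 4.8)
The plan is to prove the two implications separately; the forward one is routine and the reverse one rests on the conjugacy of maximal tori recalled just above the statement.

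For the forward direction, suppose $F:\G_1\lr\G_2$ is a Lie algebra isomorphism. The nilradical is the maximal nilpotent ideal, hence a characteristic ideal: it is carried to ideals by isomorphisms, nilpotency and maximality are both preserved, so $F(\N_1)=\N_2$. The restriction $F|_{\N_1}:\N_1\lr\N_2$ is then an isomorphism of nilpotent Lie algebras, giving $\N_1\cong\N_2$.

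For the reverse direction I would start from a Lie algebra isomorphism $\psi:\N_1\lr\N_2$ and first transport the torus. Conjugation $D\mapsto\psi D\psi^{-1}$ is a Lie algebra isomorphism $\Der(\N_1)\lr\Der(\N_2)$: a one-line check shows $\psi D\psi^{-1}$ derives $\N_2$ whenever $D$ derives $\N_1$. Since conjugation preserves commutativity and semisimplicity of linear maps, it sends the maximal torus $\h_1$ to a maximal torus $\psi\h_1\psi^{-1}$ on $\N_2$. Now I invoke the conjugacy of maximal tori on the nilpotent algebra $\N_2$: there is $\sigma\in\mathrm{Aut}(\N_2)$ with $\sigma\,(\psi\h_1\psi^{-1})\,\sigma^{-1}=\h_2$. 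Setting $\phi:=\sigma\circ\psi$, I obtain an isomorphism $\phi:\N_1\lr\N_2$ satisfying $\phi\,\h_1\,\phi^{-1}=\h_2$.

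With such a $\phi$ in hand, define $\Phi:\h_1\lr\h_2$ by $\Phi(h)=\phi\circ h\circ\phi^{-1}$, which is well defined and bijective precisely because $\phi\,\h_1\,\phi^{-1}=\h_2$, and extend to $\tilde\phi:\G_1\lr\G_2$ by $\tilde\phi(h+n)=\Phi(h)+\phi(n)$. This is a linear bijection since $\G_i=\h_i\oplus\N_i$ as vector spaces. To see it is a homomorphism I would use that the bracket $[h_1+n_1,h_2+n_2]=h_1(n_2)-h_2(n_1)+[n_1,n_2]$ always lands in the $\N$-part (as each $\h_i$ is abelian), and then the single identity $\Phi(h)(\phi(n))=(\phi h\phi^{-1})(\phi(n))=\phi(h(n))$, together with $\phi[n,n']=[\phi n,\phi n']$, makes the two sides of $\tilde\phi([x,y])=[\tilde\phi(x),\tilde\phi(y)]$ agree term by term. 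The only genuinely hard ingredient is the conjugacy of maximal tori on a nilpotent Lie algebra, which I take as known; the rest is verification that conjugation by an isomorphism sends tori to tori and that $\tilde\phi$ respects the semidirect-product bracket.
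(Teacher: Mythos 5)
The paper states this theorem as a cited result from Meng--Sheng and gives no proof of its own, so there is nothing to compare against; judged on its own, your argument is correct and is the standard one. Both directions are sound: the forward implication via the nilradical being a characteristic ideal, and the converse via transporting $\h_1$ to a maximal torus $\psi\h_1\psi^{-1}$ on $\N_2$, invoking the conjugacy of maximal tori (which the paper itself recalls just before the statement) to align it with $\h_2$, and then checking that $h+n\mapsto \phi h\phi^{-1}+\phi(n)$ respects the semidirect-product bracket, where the abelianness of $\h_1$ ensures the bracket has no $\h$-component to worry about.
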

\begin{remark}
	
	Recall that  a symplectic Lie algebra is \emph{reducible} if it has an isotropic ideal \cite{B-C}, in the contrary case the symplectic Lie algebra is called \emph{irreducible}. In the following proposition we show that any complete symplectic Lie algebra is reducible.
\end{remark}
\begin{pr}
	Let $\G$ a complete symplectic Lie algebra. Then $\G$ is reducible.
		
\end{pr}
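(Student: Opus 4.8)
The plan is to exhibit a nonzero isotropic ideal of $\G$. As a first step one checks that $\G$ is solvable: a semisimple Levi factor is incompatible with the existence of a symplectic form (its restriction is always a degenerate cocycle), in line with the obstructions recalled in the Introduction. Granting solvability, the structure theorem for solvable complete Lie algebras lets me write $\G=\h\ltimes\N$ with $\N$ the nilradical and $\h$ a maximal torus on $\N$. Throughout I would use the closedness identity obtained from \eqref{cocy},
$$\om([x,y],z)=\om(x,[y,z])-\om(y,[x,z]),\qquad x,y,z\in\G.$$

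Next I would pass to a minimal nonzero ideal $\mathfrak{a}$ of $\G$. Since $\G$ is solvable, $\mathfrak{a}$ is abelian, hence nilpotent, hence $\mathfrak{a}\subseteq\N$. The subspace $[\mathfrak{a},\N]$ is again an ideal of $\G$ (by the Jacobi identity, using that $\mathfrak{a}$ and $\N$ are ideals) and is contained in $\mathfrak{a}$; as $\N$ acts nilpotently it cannot equal $\mathfrak{a}$, so $[\mathfrak{a},\N]=0$ and $\mathfrak{a}\subseteq C(\N)$. Consequently $\N$ acts trivially on $\mathfrak{a}$, so $\mathfrak{a}$ is an irreducible $\h$-module, i.e. a single root space $\N_\beta$; moreover $\beta\neq0$, for $\beta=0$ would place $\mathfrak{a}$ in the trivial centre of $\G$.

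I would then show that its $\om$-orthogonal $\mathfrak{a}^\perp$ is a subalgebra and that $\mathfrak{a}\cap\mathfrak{a}^\perp$ is an ideal of $\G$: both follow from the displayed identity together with $[\mathfrak{a},\mathfrak{a}]=0$. This ideal is isotropic by construction, so it suffices to prove that $\om|_{\mathfrak{a}}$ is degenerate. When $\K=\mathbb{C}$ this is immediate, since $\mathfrak{a}=\N_\beta$ is one-dimensional and a $2$-form vanishes there, so $\mathfrak{a}$ itself is an isotropic ideal. When $\K=\R$ the only remaining case is $\dim\mathfrak{a}=2$ (a complex root) with $\om|_{\mathfrak{a}}$ nondegenerate. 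The identity shows each $\ad_x|_{\mathfrak{a}}$ is skew for $\om|_{\mathfrak{a}}$; were $\om|_{\mathfrak{a}}$ nondegenerate, every $\ad_h|_{\mathfrak{a}}$ with $h\in\h$ would lie in $\mathfrak{sp}(\mathfrak{a})$ and hence be traceless, whereas $\mathrm{tr}(\ad_h|_{\mathfrak{a}})=2\,\mathrm{Re}\,\beta(h)$. Thus nondegeneracy would force $\mathrm{Re}\,\beta\equiv0$.

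The main obstacle is therefore to rule out a purely imaginary root on a maximal torus, and this is where maximality of $\h$ enters decisively. Assuming $\mathrm{Re}\,\beta\equiv0$, I would choose an $\R$-linear functional $F$ on the real vector space $\mathrm{Hom}_\R(\h,\mathbb{C})$ with $F(\beta)\neq0$ and define $D$ on $\N$ by $D|_{\N_\gamma}=F(\gamma)\,\mathrm{Id}$ on each root space. Additivity of $F$ makes $D$ a derivation of $\N$, and $D$ is a real semisimple transformation commuting with $\h$, so $\h+\R D$ is a torus on $\N$; maximality then forces $D\in\h$. But $D$ acts on $\mathfrak{a}=\N_\beta$ as the nonzero real scalar $F(\beta)$, while every element of $\h$ acts on $\N_\beta$ with purely imaginary eigenvalues (as $\mathrm{Re}\,\beta\equiv0$), i.e. as a rotation of the plane $\mathfrak{a}$; a nonzero real scalar is not a rotation, a contradiction. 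Hence $\om|_{\mathfrak{a}}$ is degenerate, $\mathfrak{a}\cap\mathfrak{a}^\perp\neq0$ is a nonzero isotropic ideal, and $\G$ is reducible. The points needing the most care are the verification that $\mathfrak{a}\cap\mathfrak{a}^\perp$ is an ideal and the derivation/maximality argument of the last step.
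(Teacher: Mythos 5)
Your argument breaks down at the very first step. The claim that a nonzero semisimple Levi factor is incompatible with a symplectic structure is false: the obstructions recalled in the Introduction exclude semisimple Lie algebras and \emph{direct} sums of semisimple and solvable ones, not semidirect products. A concrete counterexample to your step is $\aff(n,\K)=\mathfrak{gl}(n,\K)\ltimes\K^n$ for $n\geq 2$: it is a classical example of a complete Lie algebra, it is Frobenius (hence symplectic), and it is not solvable. (Degeneracy of the restriction $\om|_{\mathfrak{s}}$ of a cocycle to a Levi factor $\mathfrak{s}$ does not imply degeneracy of $\om$ on $\G$, since the radical can pair symplectically with the kernel of $\om|_{\mathfrak{s}}$.) Since everything downstream --- the decomposition $\G=\h\ltimes\N$ with $\N$ the nilradical and $\h$ a torus, the placement of the minimal ideal inside $\N$, the root-space analysis --- rests on solvability, your proof does not cover all complete symplectic Lie algebras. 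The paper proceeds quite differently: it argues by contraposition, invoking the Baues--Cort\'es structure theorem for \emph{irreducible} symplectic Lie algebras (Theorem 3.16 of \cite{B-C}), which forces such an algebra into a very specific metabelian ``rotation'' model $\h\ltimes\mathfrak{a}$, and then exhibits an explicit outer derivation of that model, contradicting completeness.

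There is a second, independent problem in your final step over $\R$. You define $D|_{\N_\gamma}=F(\gamma)\,\mathrm{Id}$ with $F$ an $\R$-linear functional satisfying $F(\beta)\neq 0$. For $D$ to be a real operator it must commute with complex conjugation on $\N^{\mathbb{C}}$, which forces $F(\bar\gamma)=F(\gamma)$ for every root $\gamma$. But in the case you are trying to exclude, $\mathrm{Re}\,\beta\equiv 0$ gives $\bar\beta=-\beta$, whence $F(\beta)=F(\bar\beta)=F(-\beta)=-F(\beta)$, so $F(\beta)=0$ --- exactly the opposite of what you need. The derivation you want therefore does not exist as constructed, and no contradiction with the maximality of $\h$ is obtained. (The underlying intuition, that a maximal torus cannot act on a two-dimensional central ideal purely by rotations because one could adjoin the ``real part'' of the action, is plausible and might be made rigorous via the real Jordan decomposition of semisimple derivations; but that argument still has to be written, and in any case it would not repair the solvability gap above.)
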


\begin{proof}
	If $\G$ is a irreducible symplectic Lie algebra it can be written as: $\G = \h\ltimes \mathfrak{a} $.	Using \cite[Theorem 3.16]{B-C} and its proof, $\G$ has a basis $\mathcal{B}=\{f_1,\cdots,f_{2h},e_1^1,e_2^1,\cdots,e_1^m,e_2^m\}$ such that: 
	\[ 
	\h = <f_1,\cdots,f_{2h}> \quad and \quad \mathfrak{a} = <e_1^1,e_2^1>\oplus \cdots \oplus <e_1^m,e_2^m> , \quad m\geq 2h,
	\]
	
	with: 
	
	\[
	[f_i,e_1^k]=-\lambda_k(f_i)e^k_2, \quad and \quad  [f_i,e_2^k]=\lambda_k(f_i)e^k_1, \quad \lambda_k\in \h^*.
	\]
	In fact, if $\G$ is irreducible then $\G$ admits an exterior derivation.
	Consider the derivation $d$, defined as follow: \[ de_1^1=e_1^1, \quad  de_2^1=e^1_2.\]
	
	Suppose that  $d$ is interior, then there exists $x_0$ such that: $d=ad_{x_0}$, with \[
	x_0=\sum_{i=1}^{2h}{x_if_i}+\sum_{k=1}^{m}{x^ke_1^k}+\sum_{k=1}^{m}{y^ke_2^k}.
	\]
	So, $de_1^1=ad_{x_0}e_1^1$, then $e_1^1=[x_0,e_1^1]$, i.e $e_1^1=\sum_{i=1}^{2h}{-\lambda_1(f_i)e^1_2}$, which is impossible. 
	
\end{proof}


\section{Some solvable complete symplectic Lie algebras of maximal rank}
As the general study of the solvable complete symplectic Lie algebras is difficult, we choose to study a class of those Lie algebras, which are of maximal rank. We choose first to study the symplectic structure of the complete solvable Lie algebra with commutative nilradical, secondly we studied complete solvable Lie algebra with filiform nilradical.

\subsection{Complete solvable Lie algebra with commutative nilradical }
In what follows, we study the case where the Lie algebra $\N$ is commutative. Let $\N=\K^n$ we have $\Der(\N)=End(\K^n)$, so the maximal torus is $\h=\langle E_{1,1},\ldots, E_{n,n} \rangle$ then $\K^n$ is maximal rank and  $\G=\h\oplus \K^n$ is  complete Lie algebra with commutative nilradical  $\K^n$. Let $\{e_1,\ldots,e_n\}$ be a basis of $\K^n$, and $e_{n+i}=E_{i,i} $, the brackets of the Lie algebra $\G=\h\ltimes \K^n$ are given by
\begin{equation}\label{rn}
[e_{n+i},e_i]=e_i,\qquad 1\leq i\leq n.
\end{equation}

\begin{pr}\label{pr1}
	Any symplectic form over $\G=\h\ltimes \K^n$ is symplectomorphe to
\begin{equation}\label{Sym(r)n}
\om_{0}=\sum_{i=1}^{n}e^{i,n+i}+Z,
\end{equation}
with $Z\in\wedge^2\h^*$ an arbitrary $2$-form in $\h$.	Moreover $\om_0$ is an exact $2$-form if and only if $Z=0$.
\end{pr}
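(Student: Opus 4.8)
The plan is to read off all closed nondegenerate $2$-forms on $\G$ directly from the structure equations and then normalise them by a Lie algebra automorphism. Recall that the only nonzero brackets are $[e_{n+i},e_i]=e_i$ for $1\le i\le n$, so that $\N=\K^n=\langle e_1,\dots,e_n\rangle$ is the abelian nilradical and $\h=\langle e_{n+1},\dots,e_{2n}\rangle$ the torus. Writing a general $2$-form $\om=\sum_{a<b}\om_{ab}\,e^{a,b}$, I would impose the cocycle condition \eqref{cocy} on all triples of basis vectors and keep track of the conditions it produces.

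The computation splits according to how many entries of a triple lie in $\N$ and how many in $\h$. Triples contained entirely in $\N$ or entirely in $\h$ contribute nothing, since all brackets there vanish. For a triple with two vectors in $\N$ and one in $\h$, say $(e_i,e_j,e_{n+k})$ with $i\neq j$, the cocycle condition reduces, via $[e_{n+k},e_i]=\delta_{ki}e_i$, to $\om(e_i,e_j)=0$; hence $\om$ vanishes on $\N\we\N$ and $\N$ is isotropic. For a triple with one vector in $\N$ and two in $\h$, say $(e_i,e_{n+k},e_{n+l})$ with $k\neq l$, it reduces to $\om(e_i,e_{n+l})=0$ whenever $i\ne l$ and to $\om(e_i,e_{n+k})=0$ whenever $i\ne k$, so the mixed coefficients $\om(e_i,e_{n+j})$ survive only for $i=j$, while no condition is placed on the purely toral coefficients. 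Thus every closed $2$-form has the shape
\[
\om=\sum_{i=1}^{n}c_i\,e^{i,n+i}+Z,\qquad c_i\in\K,\ Z\in\wedge^2\h^*.
\]
Nondegeneracy is then immediate: if some $c_i=0$ then $\om(e_i,\cdot)\equiv 0$ (as $\om(e_i,e_j)=0$ and $\om(e_i,e_{n+j})=c_i\delta_{ij}=0$), so $e_i$ is in the kernel; conversely, if all $c_i\neq 0$ the block matrix $\begin{pmatrix}0&D\\-D^{\mathrm t}&Z'\end{pmatrix}$ with $D=\mathrm{diag}(c_1,\dots,c_n)$ is invertible. Hence $\om$ is symplectic exactly when all $c_i\neq0$.

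To reach the normal form I would use the linear map $\phi$ defined by $\phi(e_i)=c_i^{-1}e_i$ and $\phi(e_{n+i})=e_{n+i}$. Since $[\phi(e_{n+i}),\phi(e_i)]=c_i^{-1}e_i=\phi([e_{n+i},e_i])$, the map $\phi$ is a Lie algebra automorphism, and one checks $\phi^*\om(e_i,e_{n+i})=c_i^{-1}c_i=1$ while $\phi^*\om$ is unchanged on $\h\we\h$. Therefore $\phi^*\om=\sum_{i=1}^{n}e^{i,n+i}+Z=\om_0$, which is the asserted symplectomorphism.

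For the exactness statement I would compute the Chevalley--Eilenberg differential on $1$-forms: from $[e_{n+i},e_i]=e_i$ one gets $de^i=e^{i,n+i}$ for $1\le i\le n$, whereas $de^{n+i}=0$ because every bracket lands in $\N$. Hence the space of exact $2$-forms is exactly $\mathrm{span}\{e^{i,n+i}:1\le i\le n\}$, which has no component in $\wedge^2\h^*$. Consequently $\om_0=\sum_i e^{i,n+i}+Z$ is exact if and only if its toral part $Z$ vanishes, in which case $\om_0=d\bigl(\sum_{i=1}^{n}e^i\bigr)$. The main obstacle here is purely bookkeeping: organising the cocycle evaluation over all basis triples while keeping the signs and Kronecker deltas straight; conceptually nothing is delicate, since the bracket table is so sparse that only the two mixed families of triples produce constraints.
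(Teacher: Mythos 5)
Your proposal is correct and follows essentially the same route as the paper: impose the cocycle condition on the mixed basis triples to reduce any closed $2$-form to $\sum_i c_i e^{i,n+i}+Z$, characterize nondegeneracy by $c_i\neq 0$ for all $i$, and rescale the nilradical basis by the automorphism $e_i\mapsto c_i^{-1}e_i$ to reach $\om_0$. The only (welcome) difference is that you spell out the exactness claim via $de^i=e^{i,n+i}$, $de^{n+i}=0$, which the paper asserts without detail.
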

\begin{proof}
	An arbitrary $2$-form in $\G$ has the expression
\[\om=\sum_{1\leq i<j\leq n}a_{i,j}e^{i,j}+Z+\sum_{1\leq i,j\leq n}a_{i,n+j}e^{i,n+j},\]
with $Z\in\wedge^2\h^*$. On one side $\mathrm{d}\om(e_{n+i},e_i,e_j)=0$ implies that $a_{i,j}=0$, and on the other hand  $\mathrm{d}\om(e_{n+i},e_i,e_{n+j})=0$ implies that $a_{i,n+j}=0$ for $i\not=j$, the other conditions of $2$-cocycle are trivially verified. Then any $2$-cocycle in $\G$ is of the form  
\begin{equation*}
\om=\sum_{i=1}^{n}a_{i,n+i}e^{i,n+i}+Z.
\end{equation*}
Let us now show that $\om$ is non-degenerate if and only if $a_{i,n+i}\not=0$ for all  $1\leq i\leq n$, indeed we have
\begin{align*}
\om^n&=(\sum_{i=1}^{n}a_{i,n+i}e^{i,n+i}+\sum_{i<j}a_{n+i,n+j}e^{n+i,n+j})^n\\
&=(\sum_{i=1}^{n}a_{i,n+i}e^{i,n+i})^n\\
&=(a_{1,n+1}\ldots a_{n,2n})e^1\we\ldots\we e^{2n}.
\end{align*}
Now consider the Lie algebra automorphisms $T$ given by
$T(e_i)=\frac{1}{a_{i,n+i}}e_i$ ,\quad $T(e_{n+i})=e_{n+i}$, for $1\leq i\leq n$. We have $T^*(\om)=\om_0$.
\end{proof}
From Proposition \ref{pr1}, we get the following.
\begin{co}
	Symplectic complete Lie algebra $\h\ltimes \K^n$ has $\K^n$  as a Lagrangian ideal.
\end{co}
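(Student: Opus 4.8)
The plan is to show that $\K^n$ is a Lagrangian ideal of $(\G,\om_0)$, where $\om_0$ is the normal form from Proposition \ref{pr1}. Recall that a Lagrangian ideal is an ideal $I$ that is isotropic for $\om_0$ (i.e. $\om_0|_{I\times I}=0$) and of maximal isotropic dimension, namely $\dim I=\tfrac12\dim\G$. Since $\dim\G=2n$ and $\dim\K^n=n$, the dimension count is immediate, so two things remain to verify: that $\K^n$ is an ideal, and that it is isotropic.

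First I would check that $\K^n$ is an ideal. This is read off directly from the bracket relations \eqref{rn}: the only nonzero brackets are $[e_{n+i},e_i]=e_i$, so $[\G,\K^n]\subseteq\K^n$ (and in fact $[\h,\K^n]\subseteq\K^n$ and $[\K^n,\K^n]=0$ since $\K^n$ is commutative). Hence $\K^n=\langle e_1,\ldots,e_n\rangle$ is an abelian ideal of $\G$.

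Second I would verify isotropy. By the explicit form \eqref{Sym(r)n}, $\om_0=\sum_{i=1}^n e^{i,n+i}+Z$ with $Z\in\wedge^2\h^*$. For any basis vectors $e_i,e_j\in\K^n$ (so $1\le i,j\le n$) we have $e^{k,n+k}(e_i,e_j)=0$ for every $k$, because each term $e^{k,n+k}$ pairs an index in $\{1,\ldots,n\}$ with an index in $\{n+1,\ldots,2n\}$ and therefore vanishes on pairs drawn entirely from $\{e_1,\ldots,e_n\}$; likewise $Z\in\wedge^2\h^*$ vanishes on $\K^n$ since its indices lie in $\{n+1,\ldots,2n\}$. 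Thus $\om_0(e_i,e_j)=0$ for all $i,j\le n$, so $\K^n$ is isotropic.

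There is no real obstacle here: the statement is essentially a direct reading of the normal form produced in Proposition \ref{pr1} together with the bracket \eqref{rn}. The only point deserving a word is the dimension count confirming that an $n$-dimensional isotropic ideal inside a $2n$-dimensional symplectic space is automatically Lagrangian (maximal isotropic), which follows because an isotropic subspace of a symplectic vector space has dimension at most half the total. Combining the three facts — ideal, isotropic, and half-dimensional — yields that $\K^n$ is a Lagrangian ideal, completing the proof.
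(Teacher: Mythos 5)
Your proof is correct and follows exactly the route the paper intends: the corollary is stated as an immediate consequence of Proposition \ref{pr1}, and your verification (ideal from the brackets \eqref{rn}, isotropy from the normal form $\om_0=\sum_i e^{i,n+i}+Z$, and the half-dimension count) simply spells out what the paper leaves implicit.
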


\subsection{Complete solvable Lie algebra with filiform nilradical}

\textbf{Filiform Lie algebra of rank $2$}: We have only two types of filiform Lie algebras of rank $2$: $\Lf_n$ and $\Q_n$.

\textbf{Case $\Lf_n$}: Let $\Lf_n$ be the $n$-dimensional Lie algebra defined by
\begin{equation*}
[e_1,e_i]= e_{i+1},\qquad i =2,\ldots,n-1,
\end{equation*}
where $\B=\{e_1,\ldots,e_n\}$ is a basis of $\Lf_n$. The maximal torus of $\Lf_n$ is spanned by $h_1$ and $h_2$ with
\begin{align*}
h_1(e_1)=e_1,& \quad h_1(e_i)=(i-2)e_i,\quad 2\leq i\leq n.\\
h_2(e_1)=0,&\quad h_2(e_i)=e_i,\quad 2\leq i\leq n.
\end{align*}
Then $\Lf_n$ is maximal rank and any  complete solvable Lie algebra    with  nilradical  $\Lf_n$, up to isomorphism, is $\G=\h\oplus \Lf_n$ with
\begin{align*}
[e_1,e_i]= e_{i+1},&\qquad\qquad\qquad\qquad\qquad  2\leq i\leq n-1,\\
[e_{n+1},e_1]=e_1,& \quad [e_{n+1},e_i]=(i-2)e_i,\quad 2\leq i\leq n.\\
[e_{n+2},e_i]=e_i,&\qquad\qquad\qquad\qquad\qquad 2\leq i\leq n.
\end{align*}
where $\Lf_n=\langle e_1,\cdots,e_n\rangle$ and $\h=\langle e_{n+1},e_{n+2}\rangle$.

\begin{pr}\label{pr2}
A complete solvable Lie algebra    with  nilradical  the filiform Lie algebra $\Lf_n$ is symplectic if and only if $n=4$.
	\end{pr}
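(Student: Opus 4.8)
The plan is to prove both implications, with essentially all the content in the \emph{only if} direction. Throughout I write $\G=\h\ltimes\Lf_n$ with $\h=\langle e_{n+1},e_{n+2}\rangle$ as in the statement, so that $\dim\G=n+2$. Since a symplectic form exists only in even dimension, a first necessary condition is that $n$ be even; and as $\Lf_n$ is filiform we have $n\geq 3$, so in fact $n\geq 4$. The heart of the argument is then to produce, for \emph{every} closed $2$-form $\om$ on $\G$, a large isotropic subspace and to compare its dimension with $\tfrac12\dim\G$.

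The key observation is that $W:=\langle e_2,\ldots,e_n\rangle$ is an abelian subalgebra of dimension $n-1$ on which $e_{n+2}$ acts as the identity: indeed $[e_i,e_j]=0$ for $2\leq i,j\leq n$, while $[e_{n+2},e_i]=e_i$ for $2\leq i\leq n$. Evaluating the cocycle identity \eqref{cocy} on the triple $(e_{n+2},e_i,e_j)$ with $2\leq i<j\leq n$ gives
\[
0=\mathrm{d}\om(e_{n+2},e_i,e_j)=\om([e_{n+2},e_i],e_j)+\om([e_i,e_j],e_{n+2})+\om([e_j,e_{n+2}],e_i)=2\,\om(e_i,e_j),
\]
so $\om$ vanishes on $W\times W$ for any $2$-cocycle $\om$; that is, $W$ is isotropic. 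If moreover $\om$ is non-degenerate, an isotropic subspace has dimension at most $\tfrac12\dim\G=\tfrac{n+2}{2}$, whence $n-1\leq\tfrac{n+2}{2}$, i.e. $n\leq 4$. Together with $n\geq 4$ even this forces $n=4$, which establishes the \emph{only if} direction.

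For the converse it suffices to exhibit one symplectic form when $n=4$, where $\G$ is six-dimensional with basis $e_1,\ldots,e_4,e_5,e_6$ (so $e_5=e_{n+1}$, $e_6=e_{n+2}$). I would take
\[
\om_0=e^{1,3}+e^{2,6}-2\,e^{4,5}-e^{4,6},
\]
check directly that it satisfies \eqref{cocy} on all basis triples (only those involving $e_1$ or the torus are non-trivial, and these reduce to the defining brackets), and verify non-degeneracy through
\[
\om_0^{3}=12\,e^1\we e^2\we e^3\we e^4\we e^5\we e^6\neq 0.
\]
Alternatively one can first determine the full space of $2$-cocycles by running \eqref{cocy} over all basis triples---this produces a small family parametrised by the coefficients $\om(e_1,e_j)$, $\om(e_2,e_6)$, $\om(e_1,e_5)$ together with the torus term $\om(e_5,e_6)$---and then simply read off a non-degenerate member; this also confirms that $\om_0$ is closed.

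The main obstacle is conceptual rather than computational: one must recognise that a rank-two torus cannot pair non-degenerately with the codimension-one abelian subalgebra $W$ once $n$ is large, which is precisely what the inequality $n-1\leq\tfrac{n+2}{2}$ quantifies. The vanishing $\om|_{W\times W}=0$ drops out in a single line once one tests \eqref{cocy} against $e_{n+2}$, which acts as one fixed nonzero scalar on all of $W$; with this isotropic subspace in hand, both directions are short.
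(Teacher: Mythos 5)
Your proof is correct, but it takes a genuinely different route from the paper's. The paper determines the full space of $2$-cocycles on $\G=\h\ltimes\Lf_n$ by solving the cocycle system explicitly, writes down the $(n+2)\times(n+2)$ Gram matrix $M(\om_n,\mathcal B)$ of a general cocycle, and computes its determinant, which vanishes identically for $n>4$ and equals $a_{1,3}^2(a_{1,2}^2-2a_{1,3}a_{2,6})^2$ for $n=4$. You instead extract a single consequence of the cocycle identity: testing \eqref{cocy} against $e_{n+2}$, which acts as the identity on the abelian hyperplane $W=\langle e_2,\ldots,e_n\rangle$ of the nilradical, forces $2\,\om(e_i,e_j)=0$, so $W$ is totally isotropic for \emph{every} closed $2$-form; the bound $\dim W\leq\tfrac12\dim\G$ then gives $n\leq 4$ at once, and parity gives $n=4$. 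This is shorter, conceptual, and avoids the large determinant; it also isolates the real mechanism (a torus element acting as a nonzero scalar on an abelian subalgebra of more than half the dimension is an obstruction), which generalizes beyond $\Lf_n$. What the paper's computation buys in exchange is the complete parametrization of all symplectic forms in the case $n=4$, not just the existence of one. Your witness $\om_0=e^{1,3}+e^{2,6}-2e^{4,5}-e^{4,6}$ is in fact the member of the paper's cocycle family with $a_{1,3}=a_{2,6}=1$ and the other parameters zero; it is indeed closed and satisfies $\om_0^3=12\,e^1\we\cdots\we e^6$, consistent with the paper's determinant value $4$. The only soft spot is that you leave the closedness check of $\om_0$ as a sketch; it is a routine finite verification and does go through, but it should be carried out (or $\om_0$ should be identified inside the computed cocycle space) for completeness.
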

\begin{proof}
	An arbitrary $2$-form of $\G$ can be written as
	\[\om_n=\sum_{1\leq i<j\leq n}a_{i,j}e^{i,j}+\sum_{1\leq i\leq n}a_{i,n+1}e^{i,n+1}+\sum_{1\leq i\leq n}a_{i,n+2}e^{i,n+2}+a_{n+1,n+2}e^{n+1,n+2}.\]
The condition $2$-cocycle implies

\[\left\{
\begin{array}{ll}
\mathrm{d}\om_{n}(e_{1},e_{n+1},e_{n+2})=-\om_{n}(e_{1},e_{n+2}), & \\
\mathrm{d} \om_{n}(e_{1},e_{j},e_{n+2}) = \omega_{n}(e_{j+1},e_{n+2})-\om_{n}(e_{j},e_{1}) &  2\leq j\leq n-1,\\
\mathrm{d} \om_{n}(e_{1},e_{j},e_{n+1}) = \omega_{n}(e_{j+1},e_{n+1})-\omega_{n}((j-2)e_{j},e_{1})+\omega_{n}(e_{1},e_{j}) & 2\leq j\leq n-1, \\
\mathrm{d} \om_{n}(e_{i},e_{n+1},e_{n+2}) = (2-i)\omega_{n}(e_{i},e_{n+2})+\omega_{n}(e_{i},e_{n+1}) &  2\leq i\leq n,\\
\mathrm{d} \om_{n}(e_{i},e_{n+1},e_{j}) = 2(2-i)\om_{n}(e_{i},e_{j}) &  2\leq i\neq j\leq n-1,\\
\mathrm{d} \om_{n}(e_{i},e_{n+2},e_{j}) = -2\omega_{n}(e_{i},e_{j})&  2\leq  i\neq j\leq n-1 ,\\
\mathrm{d} \om_{n}(e_{n},e_{n+2},e_{j}) = 2\omega_{n}(e_{j},e_{n}) & 2\leq j\leq n-1,\\
\mathrm{d} \om_{n}(e_{1},e_{n},e_{n+1}) = n\omega_{n}(e_{1},e_{n}), & \\
\mathrm{d} \om_{n}(e_{i},e_{n+1},e_{n}) = (1-n)\om_{n}(e_{i},e_{n}) & 2\leq i\leq n-1, \\
\mathrm{d} \om_{n}(e_{1},e_{i},e_{j}) = \omega_{n}(e_{i+1},e_{j})-\om_{n}(e_{j+1},e_{i}) & 2\leq  i\neq j\leq n-1.
\end{array}
\right.\]

This system is equivalent to

\[\left\{
\begin{array}{ll}

a_{1,n+2}=0, & \\
a_{j+1,n+2}=a_{1,j} & 2\leq j\leq n-1,\\
a_{j+1,n+1}=(j-1)a_{1,j} &2\leq  j\leq n-1,\\
a_{i,n+1}=(i-2)a_{i,n+2} & 2\leq  i\leq n, \\
a_{i,j}=0 & 2\leq  i\neq j\leq n-1, \\
a_{j,n}=0 & 2\leq  i\neq j\leq n-1, \\
a_{i,n}=0 & 1\leq  i\leq n-1, \\
a_{i+1,j}=a_{j+1,i}, & 2\leq  i\neq j\leq n-1.\end{array}
\right.\]

The other conditions of $2$-cocycle are trivially verified.	Then any $2$-cocycle for the scalar cohomology of $\G=\h\oplus \Lf_n$ has the following expression
	\begin{equation*}
	\om_n=\sum_{i=1}^{n-2}a_{1,i+1}e^{1,i+1}-\sum_{i=1}^{n-2}a_{1,i+1}(ie^{i+2,n+1}+e^{i+2,n+2})+a_{1,n+1}e^{1,n+1}+a_{2,n+2}e^{2,n+2}+a_{n+1,n+2}e^{n+1,n+2},
	\end{equation*}

\[M(\om_n,\B)=\begin{pmatrix}
0 & -a_{1,2} &\cdots &-a_{1,n-1}&0&a_{1,n+1}&0 \\
a_{1,2} & 0 &\cdots &0&0&0&a_{2,n+2} \\
a_{1,3} & 0 &\cdots &0&0&a_{1,2}&a_{1,2} \\
a_{1,4}&0 &\cdots &0&0&2a_{1,3}&a_{1,3} \\
\vdots&\vdots &\vdots &\vdots&\vdots&\vdots&\vdots\\
a_{1,n-1} & 0 &\cdots &0&0&(n-3)a_{1,n-2}&a_{1,n-2} \\
0 & 0 &\cdots &0&0&(n-2)a_{1,n-1}&a_{1,n-1} \\
-a_{1,n+1} &0&\cdots &-(n-3)a_{1,n-2}&-(n-2)a_{1,n-1}&0&a_{n+1,n+2} \\
0&-a_{2,n+2}&-a_{1,2}  &\cdots &-a_{1,n-1}&-a_{n+1,n+2} &0
	\end{pmatrix}\]

	After a calculation of the determinant, we find for $n=4$ $\det(M(\om_4,\B))=a_{1,3}^{^{2}}\left(a_{1,2}^{2}-2a_{1,3}a_{2,6}\right)^{2}$, and for $n>4, \det(M(\om_n,\B))=0.$

\end{proof}
\textbf{Case $\Q_n$}: Let $\Q_n$, with $n=2k+1$ be the $n+1$-dimensional Lie algebra defined by
\begin{align*}
&[e_0,e_i]= e_{i+1},\qquad\qquad\qquad\qquad 1\leq i\leq n-1,\\
&[e_i,e_{n-i}]=(-1)^i e_{n}, \qquad\qquad\qquad 1\leq i \leq k,
\end{align*}

where $\{e_0,e_1\ldots,e_n\}$ is a basis of $\Q_n$. The maximal torus of $\Q_n$ is spanned by $h_1$ and $h_2$ with
\begin{align*}
h_1(e_0)=e_0,& \quad h_1(e_n)=(n-2)e_n \quad h_1(e_i)=(i-1)e_i,\;1\leq i\leq n-1 .\\
h_2(e_0)=0,&\quad h_2(e_n)=2e_n, \quad  h_2(e_i)=e_i,\quad 1\leq i\leq n-1.
\end{align*}
Then $\Q_n$ is maximal rank and any  complete solvable Lie algebra    with  nilradical  $Q_n$, up to isomorphism, is $\G=\h\oplus \Q_n$ with
\begin{align*}
&[e_0,e_i]= e_{i+1},\;\qquad\qquad\qquad\qquad\qquad\qquad\qquad\qquad\qquad\qquad  1\leq i\leq n-1,\\
&[e_i,e_{n-i}]=(-1)^i e_{n},\,\qquad\qquad\qquad\qquad\qquad\qquad\qquad\qquad\qquad 1\leq i \leq k,\\
&[e_{n+1},e_0]=e_0, \quad [e_{n+1},e_i]=(i-1)e_i, \quad [e_{n+1},e_n]=(n-2)e_n,\quad 1\leq i\leq n-1,\\
&[e_{n+2},e_i]=e_i,\quad[e_{n+2},e_n]=2e_n,\qquad\qquad\qquad\qquad\qquad\quad\qquad1\leq i\leq n-1.
\end{align*}
where $\Q_n=\langle e_0,\cdots,e_n\rangle$ and $\h=\langle e_{n+1},e_{n+2}\rangle$, note that $\dim(\G)=n+3=2(k+2)$.
\begin{pr}\label{pr}
	A complete solvable Lie algebra    with  nilradical  the filiform Lie algebra $\Q_n$ is symplectic. Morever an exat 2-form is given by
	$\om_{n}=\mathrm{d}e^0+\mathrm{d}e^n.$
\end{pr}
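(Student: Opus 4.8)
The plan is to exploit that the proposed form is \emph{by construction} exact, so that the $2$-cocycle condition \eqref{cocy} holds automatically and only non-degeneracy remains to be verified. Using the Chevalley--Eilenberg differential $\mathrm{d}\xi(x,y)=-\xi([x,y])$ on $1$-forms (which together with \eqref{cocy} satisfies $\mathrm{d}^2=0$ by the Jacobi identity), writing $\om_n=\mathrm{d}(e^0+e^n)$ immediately gives $\mathrm{d}\om_n=0$ and exhibits $\om_n$ as an exact $2$-form; thus the ``moreover'' part of the statement is part of the construction itself, and the work lies entirely in showing $\om_n$ is symplectic.

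First I would compute $\mathrm{d}e^0$ and $\mathrm{d}e^n$ from the bracket table of $\G=\h\oplus\Q_n$. The generator $e_0$ is produced by the single bracket $[e_{n+1},e_0]=e_0$, whence $\mathrm{d}e^0=e^{0,n+1}$. The generator $e_n$ is produced by several brackets, namely $[e_0,e_{n-1}]=e_n$, the filiform pairings $[e_i,e_{n-i}]=(-1)^ie_n$ for $1\le i\le k$, and the two torus weights $[e_{n+1},e_n]=(n-2)e_n$, $[e_{n+2},e_n]=2e_n$. Collecting these yields
\[
\mathrm{d}e^n=-e^{0,n-1}+\sum_{i=1}^{k}(-1)^{i+1}e^{i,n-i}+(n-2)e^{n,n+1}+2e^{n,n+2},
\]
and adding $\mathrm{d}e^0$ gives the explicit expression of $\om_n$ in the basis $\B$, which I would record as the skew-symmetric matrix $M(\om_n,\B)$.

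The decisive step is non-degeneracy, and here the point is the rigidity of the index pattern, which I would phrase through the top exterior power $\om_n^{k+2}$ (recall $\dim\G=n+3=2(k+2)$). A nonzero monomial in $\om_n^{k+2}$ amounts to choosing $k+2$ of the decomposable summands of $\om_n$ whose indices are pairwise disjoint and exhaust $\{0,1,\dots,n+2\}$, i.e.\ a perfect matching of the coupling graph. Now $e^{n,n+2}$ is the only summand involving the index $n+2$, so it must be used, consuming $n$; this leaves $e^{0,n+1}$ as the only available summand for the index $n+1$, consuming $0$; this in turn leaves $e^{1,n-1}$ as the only available summand for $n-1$, consuming $1$; and finally each remaining index $i$ with $2\le i\le k$ occurs only in $e^{i,n-i}$, forcing those pairings. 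Hence exactly one monomial survives, and
\[
\om_n^{k+2}=c\,e^0\we e^1\we\cdots\we e^{n+2},\qquad c=\pm\,2\,(k+2)!\neq0,
\]
so $\om_n$ is non-degenerate and $(\G,\om_n)$ is symplectic.

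The main obstacle I anticipate is not conceptual but a matter of careful bookkeeping: assembling $\mathrm{d}e^n$ correctly (it superimposes the filiform bracket $[e_i,e_{n-i}]$ with the two distinct weights of the torus on $e_n$), and then arguing that the two ``competing'' summands $-e^{0,n-1}$ and $(n-2)e^{n,n+1}$ genuinely cannot enter any perfect matching. This is where I would be most explicit: using $e^{0,n-1}$ would strand $e_1$, whose only other partner is $e_{n-1}$, while using $e^{n,n+1}$ would strand $e_{n+2}$, whose only partner is $e_n$; either choice destroys the matching, confirming both the uniqueness of the surviving monomial and that $c\neq0$.
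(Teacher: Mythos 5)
Your proof is correct and follows essentially the same route as the paper: exactness of $\om_n$ is immediate from the construction, and non-degeneracy is established by showing the top power $\om_n^{k+2}$ is a nonzero multiple of the volume form, using the same Maurer--Cartan expressions for $\mathrm{d}e^0$ and $\mathrm{d}e^n$. The only difference is organizational: the paper isolates the surviving monomial by splitting $\om_n=A_n+B_n$ and computing $A_n^2\we B_n^k$, whereas you identify it as the unique perfect matching of the coupling graph --- an equivalent, and arguably cleaner, way of doing the same bookkeeping.
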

\begin{proof}
It suffices to show that the exact $2$-form $\om_n$ is non-degenerate.	Let $\{e^i\}_{0\leq i\leq  n+2}$ be the dual basis of $\{e_i\}_{0\leq i\leq  n+2}$. By the Maurer-Cartan equation we have:
	\begin{equation*}
	\mathrm{d}e^0= e^{0,n+1}\esp\mathrm{d}e^n=\sum_{i=1}^k(-1)^ie^{n-i,i}+e^{n-1,0}+(n-2)e^{n,n+1}+2e^{n,n+2}.
	\end{equation*}
Let $A_n$ be the 2-form in $(\we^2\G)^*$ given by
\[A_n=e^{0,n+1}+e^{n-1,0}+(n-2)e^{n,n+1}+2e^{n,n+2},\]
we have, $A_n^2=e^{0,n}\we(-4e^{n+1,n+2}+2(n-1)e^{n-1,n+1}+4e^{n-1,n+2})$ and $A_n^i=0$ for $i>2$.

Let $B_n$ be the 2-form in $(\we^2\G)^*$ given by
\[B_n=\sum_{i=1}^k(-1)^ie^{n-i,i},\]
 we have, $ B_n^k=e^1\we\ldots\we e^{n-1}$ and  $B_n^{k+1}=B_n^{k+2}=0$.
 
 Then $\om_n$ can be written as $\om_n=A_n+B_n$, and it follows that
\begin{align*}
\om_n^{k+2}=&(A_n+B_n)^{k+2}\\
=&A_n^2\we B_n^{k}\\
=&4e^0\we e^1\we\ldots\we e^{n+2},
\end{align*}
which proves that $\om_n$ is non-degenerate.
\end{proof}

\section{Symplectic Complete Solvable Lie Algebra  with nilradical $\leq 6$}

\upshape In this section we calculate the symplectic structure if it exists for the different nilradicals $\leq 6$ given by the tables in \cite{P-Winter}.  We begin with an example, then we give the tables emphasizing the nilradical, the brackets of the nilradical with the torus, the symplectic structure if it exists and finally the maximality of the rank.

We will give the proof of the case $\N=\N_{4,1}$, since all cases should be handled in a similar way. 

Let $\N=\N_{4,1}=<e_1,e_2,e_3,e_4>$, we have: $[e_2,e_4]=e_1,[e_3,e_4]=e_2$.\\
Firstly, we must calculate the derivations $Der(\N)$. The basis $\left\{D_1,...,D_7\right\}$ is given by:

\[\left\{
\begin{array}{lcl}
D_1(e_1)&=&e_1,D_1(e_3)=-e_3,D_1(e_4)=e_4,\\
D_2(e_2)&=&e_2,D_2(e_3)=2e_3,D_2(e_4)=-e_4,\\
D_3(e_2)&=&e_1,D_3(e_3)=e_2,\\
D_4(e_3)&=&e_1,D_5(e_4)=e_1,\\
D_6(e_4)&=&e_2,D_7(e_4)=e_3.\\

\end{array}
\right.\]

We set $\h=<e_5,e_6>$ such that: $e_5=D_1$ and $e_6=D_2$.

Now, we calculate the Lie brackets of $\G=\h \ltimes\N  $, which are:

$
[e_2,e_4]=e_1,[e_3,e_4]=e_2,[e_5,e_1]=e_1,[e_5,e_3]=-e_3,
[e_5,e_4]=e_4,[e_6,e_2]=e_2,[e_6,e_3]=2e_3,[e_6,e_4]=-e_4.
$

An arbitrary $2-$form of $\G$ can be written as:
\[\om=\sum_{1\leq i<j\leq 6}a_{i,j}e^{i,j}.\]

\[\left\{
\begin{array}{lll}
a_{1,2}=0,&a_{1,3}=0,&a_{1,4}=0,\\
a_{1,6}=0,&a_{2,3}=0,&a_{2,5}=0,\\
a_{1,5}=-a_{2,4},& a_{2,6}=-a_{3,4},\\
a_{3,6}=-2a_{3,5},& a_{4,6}=-a_{4,5}.
\end{array}
\right.\]

Hence: $\om = -a_{2,4}(e^{1,5}-e^{2,4})-a_{3,4}(e^{2,6}-e^{3,4})-a_{4,6}(e^{4,5}-e^{4,6})+a_{3,5}(e^{3,5}-2e^{3,6})+a_{5,6}e^{5,6}.$

In the following, the first column identifies the nilradical, the second column provides the brackets of the torus and nilradical, the third column specifies the symplectic structure if it exists, and finally, in the forth column we verify whether if the brackets are of maximal rank or not.

\subsection{Nilradical of dimension $3$}
We summarize the calculation of the dimension $3$ in this table:
\[
\begin{array}{|c|l|c|c|} \hline
 \mbox{Nilradical}&  \mbox{Brackets of } \h \ltimes\N   & \mbox{Symplectic}& \mbox{Maximal}\\
&   & \mbox{structure} & \mbox{rank}\\\hline

\N_{3,1}&[e_1,e_2]=e_3,[e_4,e_1]=e_1,[e_4,e_2]=e_2,&  \mbox{Dimension not even}&Yes\\

&[e_5,e_2]=e_2,[e_5,e_3]=e_3&&\\\hline
\end{array}
  \]

\subsection{ Nilradical of dimension $4$}

We summarize the calculation of the dimension $4$ in this table:
\[
\begin{array}{|c|l|c|c|} \hline
 \mbox{Nilradical}&  \mbox{Brackets of } \h \ltimes\N  & \mbox{Symplectic }& \mbox{Maximal }\\
&   & \mbox{structure} & \mbox{rank}\\\hline
\N_{4,1}&[e_2,e_4]=e_1,[e_3,e_4]=e_2& a_{2,4}(e^{1,5}-e^{2,4})+a_{3,4}(e^{2,6}-e^{3,4})& Yes\\

&[e_5,e_1]=e_1,[e_5,e_3]=-e_3,&-a_{3,5}(e^{3,5}-2e^{3,6})+a_{4,6}(e^{4,5}-e^{4,6})&\\
&[e_5,e_4]=e_4,[e_6,e_2]=e_2,&-a_{5,6}e^{5,6}&\\
&[e_6,e_3]=2e_3,[e_6,e_4]=-e_4.&Conditions: a_{2,4}\neq 0, 2a_{3,5}a_{2,4}\neq a_{3,4}^{2}&\\\hline

\end{array}			
	\]

\subsection{ Nilradical of dimension $5$}


We summarize the calculation of the dimension $5$ in this table:\\
\[
\begin{array}{|c|l|c|c|} \hline
 \mbox{Nilradical}&  \mbox{Brackets of } \h \ltimes\N  & \mbox{Symplectic} & \mbox{Maximal}\\
&   & \mbox{structure} & \mbox{rank}\\\hline

\N_{5,1}&[e_3,e_5]=e_1,[e_4,e_5]=e_2,	& a_{3,5}(e^{1,6}-e^{3,5})+a_{4,5}(e^{2,7}-e^{4,5})& Yes\\

&[e_6,e_1]=e_1,[e_6,e_4]=-e_4,&-a{3,8}e^{3,8}+a_{4,8}(e{4,6}-e{4,7}-e{4,8})&\\
&[e_6,e_5]=e_5,[e_7,e_2]=e_2,&+a_{5,8}(e^{5,6}-e^{5,8})-a_{6,7}e^{6,7}-a_{6,8}e^{6,8}&\\
&[e_7,e_4]=e_4,[e_8,e_3]=e_3,&-a_{7,8}e^{7,8}&\\
&[e_8,e_4]=e_4,[e_8,e_5]=-e_5&Conditions: a_{4,5}\neq0, a_{3,5}\neq 0, &\\

&&a_{3,5}a_{4,8}\neq a_{3,8}a_{4,5}&\\\hline
\N_{5,2}&[e_2,e_5]=e_1,[e_3,e_5]=e_2, &\mbox{Dimension not even}& Yes\\
&[e_4,e_5]=e_3,[e_6,e_1]=e_1 &&\\
&[e_6,e_3]=-3e_3,[e_6,e_4]=-2e_4&&\\
&[e_6,e_5]=e_5,[e_7,e_2]=e_2&&\\
&[e_7,e_3]=2e_3,[e_7,e_4]=3e_4&&\\
&[e_7,e_5]=-e_5&&\\\hline


\N_{5,3}&[e_3,e_4]=e_2,[e_3,e_5]=e_1, &\mbox{Dimension not even}& Yes\\

&[e_4,e_5]=e_3,[e_6,e_1]=e_1 &&\\

&[e_6,e_3]=\frac{1}{3}e_3,[e_6,e_4]=-\frac{1}{3}e_4&&\\
&[e_6,e_5]=\frac{2}{3}e_5,[e_7,e_2]=e_2&&\\
&[e_7,e_3]=\frac{1}{3}e_3,[e_7,e_4]=\frac{2}{3}e_4&&\\
&[e_7,e_5]=-\frac{1}{3}e_5&&\\\hline


\N_{5,4}& [e_2, e_4] = e_1, [e_3, e_5] = e_1&a_{3,5}(e^{1,6}-e^{2,4}-e^{3,5})-a_{2,8}e^{2,8}&No\\

&[e_6, e_1] = e_1, [e_6, e_4] = e_4  &-a_{3,7}e^{3,7}+a_{4,8}(e^{4,6}-e^{4,8})+&\\

&[e_6, e_5] = e_5,[e_7,e_3]=e_3,&+a_{5,7}(e^{5,6}-e^{5,7})-a_{6,7}e^{6,7}-a_{6,8}e^{6,8}&\\
&[e_7, e_5] = -e_5,[e_8,e_2]=e_2,&-a_{7,8}e^{7,8}&\\
&[e_8,e_4]=-e_4,&Conditions: a_{7, 8}\neq 0, a_{3,5}\neq 0&\\\hline

\N_{5,5}& [e_2, e_5] = e_1, [e_3, e_4] = e_1, &\mbox{Dimension not even}&No\\

&[e_3, e_5] = e_2,[e_6,e_1]=e_1 &&	\textcolor[rgb]{1,1,1}{Maximal}\\

&[e_6, e_3] = -e_3,[e_6,e_4]=2e_4,&&\\
&[e_6, e_5] = e_5,[e_7,e_2]=e_2,&&\\
&[e_7,e_3]=2e_3,[e_7,e_4]=-2e_4&&\\

&[e_7,e_5]=-e_5&&\\\hline

\N_{5,6}& [e_2,e_5]=e_1,[e_3,e_4]=e_1, &-a_{3,4}(-e^{1,6}+e^{2,5}+e^{3,4})&No\\

&[e_3, e_5] = e_2, [e_4, e_5] = e_3 &+a_{2,6}(-e^{2,6}+\frac{5}{4}e^{3,5})&\\

&[e_6, e_1] = e_1,[e_6,e_2]=\frac{4}{5}.e_2,&+a_{3,6}(-e^{3,6}+\frac{5}{3}e^{4,5})&\\
&[e_6, e_3] = \frac{3}{5}e_3,[e_6,e_4]=\frac{2}{5}e_4,&-a_{4,6}e^{4,6}-a_{5,6}e^{5,6}&\\
&[e_6,e_5]=\frac{1}{5}e_5&Conditions: a_{3,4} \neq 0&\\\hline

\end{array}
\]

\subsection{ nilradical of dimension $6$}
We summarize the calculation of the dimension $6$ in this table:
\[
\begin{array}{|c|l|c|c|} \hline
 \mbox{Nilradical}&  \mbox{Brackets of } \h \ltimes\N & \mbox{Symplectic}& \mbox{Maximal}\\
&   & \mbox{structure} & \mbox{rank}\\\hline

\N_{6,1}&[e_1,e_2]=e_3,[e_1,e_3]=e_4,	& \mbox{Dimension not even}& Yes\\

&[e_1,e_5]=e_6,[e_7,e_1]=e_1,&&\\
&[e_7,e_3]=e_3,[e_7,e_4]=2e_4,&&\\
&[e_7,e_6]=e_6,[e_8,e_2]=e_2,&&\\
&[e_8,e_3]=e_3,[e_8,e_4]=e_4,&&\\

&[e_9,e_5]=e_5,[e_9,e_6]=e_6,&&\\\hline
\N_{6,2}&[e_1,e_2]=e_3,[e_1,e_3]=e_4, &\mbox{Never admits a symplectic}&Yes\\
&[e_1, e_4] = e_5, [e_1, e_5] = e_6 &\mbox{structure}&\\
&[e_7,e_1]=e_1,[e_7,e_3]=e_3,&&	\\
&[e_7,e_4]=2e_4,[e_7,e_5]=3e_5,&&\\
&[e_7,e_6]=4e_6,[e_8,e_2]=e_2,&&\\
&[e_8,e_3]=e_3,[e_8,e_4]=e_4,&& \\
&[e_8,e_5]=e_5,[e_8,e_6]=e_6&& \\\hline


\N_{6,3}&[e_1,e_2]=e_6,[e_1,e_3]=e_4, &\mbox{Dimension not even}&Yes\\

&[e_2, e_3]=e_5,[e_7,e_1]=e_1,&&\\

&[e_7,e_4]=e_4,[e_7,e_6]=e_6,&&\\
&[e_8,e_2]=e_2,[e_8,e_5]=e_5,&&\\
&[e_8,e_6]=e_6,[e_9,e_3]=e_3,&&\\
&[e_9,e_4]=e_4,[e_9,e_5]=e_5,&&\\\hline




\N_{6,4}&[e_1,e_2]=e_5,[e_1,e_3]=e_6, &\mbox{Dimension not even}&No\\

&[e_2, e_4] = e_6,[e_7,e_1]=e_1,&&\textcolor[rgb]{1,1,1}{Maximal}\\

&[e_7,e_4]=e_4,[e_7,e_5]=e_5,&&\\
&[e_7,e_6]=e_6,[e_8,e_2]=e_2,&&\\
&[e_8,e_4]=-e_4,[e_8,e_5]=e_5,&&\\

&[e_9,e_3]=e_3,[e_9,e_4]=e_4,&&\\

&[e_9,e_6]=e_6&&\\\hline

\N_{6,5}& [e_1,e_3]=e_5,[e_1,e_4]=e_6, &a_{6,8}(ae^{1,3}+ae^{2,4}-ae^{5,7}-ae^{5,9}&Yes\\

&[e_2, e_3] = a*e_6, [e_2, e_4] = e_5&-e^{6,8}-ae^{6,10})+a_{6,9}(e^{1,4}+ae^{2,3}&\\

&[e_7,e_1]=e_1,[e_7,e_2]=e_2,&-e^{5,8}-ae^{5,10}-e^{6,7}-e^{6,8})-a_{2,8}(e^{1,7}+e^{2,8})&\\
&[e_7,e_5]=e_5,[e_7,e_6]=e_6,&-a_{1,8}(e^{1,8}+ae^{2,7})-a_{4,10}(e^{3,9}+ae^{4,10})&\\
&[e_8,e_1]=\frac{1}{a}.e_2,[e_8,e_2]=e_1,&-a_{4,9}(e^{4,9}+ae^{3,10})-a_{7,8}e^{7,8}-a_{7,9}e^{7,9}&\\
&[e_8,e_5]=e_6,[e_8,e_6]=\frac{1}{a}e_5,&-a_{7,10}e^{7,10}-a_{8,9}e^{8,9}-a_{8,10}e^{8,10}-a_{9,10}e^{9,10}&\\
&[e_9,e_3]=e_3,[e_9,e_4]=e_4,&&\\
&[e_9,e_5]=e_5,[e_9,e_6]=e_6,&Conditions: a\neq 0,  a_{6,8}^{2}\neq a_{6,9}^{2},&\\\
&[e_{10},e_3]=ae_4,[e_{10},e_4]=e_3&a a_{7,8}+a a_{8,9}\neq a_{7,10}+a_{9,10} & \\
&[e_{10},e_5]=ae_6,[e_{10},e_6]=e_5&&\\\hline

\N_{6,6}& [e_1,e2]=e_6,[e_1,e_3]=e_4, &a_{6,7}(\frac{1}{3}.e^{1,2}-e^{6,7})+a_{4,8}(e^{1,3}&No\\
& [e_1, e_4] = e_5, [e_2, e_3] = e_5&-e^{4,7}-e^{4,8})+a_{5,8}(e^{1,4}+e^{2,3}&\\
&[e_7,e_1]=e_1,[e_7,e_2]=2e_2&-2e^{5,7}-e^{5,8})-a_{1,7}e^{1,7}-a_{2,7}e^{2,7}&\\
&[e_7,e_4]=e_4,[e_7,e_5]=2e_5&-a_{3,8}e^{3,8}-a_{7,8}e^{7,8}&\\
&[e_7,e_6]=3e_6,[e_8,e_3]=e_3&Conditions: a_{6,7}\neq 0, a_{5,8}\neq 0&\\
&[e_8,e_4]=e_4,[e_8,e_5]=e_5&&\\\hline


\end{array}\]
\[
\begin{array}{|c|l|c|c|} \hline
\N_{6,7}&[e_1,e_3]=e_4,[e_1,e_4]=e_5,  &\mbox{Dimension not even}&Yes\\

& [e_2, e_3] = e_6,[e_7,e_1]=e_1&&\\
&[e_7,e_4]=e_4,[e_7,e_5]=2e_5&&\\
&[e_8,e_2]=e_2,[e_8,e_6]=e_6&&\\
&[e_9,e_3]=e_3,[e_9,e_4]=e_4&&\\
&[e_9,e_5]=e_5,[e_9,e_6]=e_6&&\\\hline


\N_{6,8}& [e_1,e_2]=e_3+e_5, &-a_{1,2}e^{1,2}+a_{4,8}(e^{1,3}-2e^{4,7}-e^{4,8})&No\\

&[e_1, e_3] = e_4, [e_2, e_5] = e_6 &-a_{1,7}e^{1,7}+a_{6,8}(\frac{1}{2}e^{2,5}-\frac{1}{2}e^{6,7}-e^{6,8})&\\
&[e_7,e_1]=e_1,[e_7,e_3]=e_3&-a{2,8}e^{2,8}+(a_{1,2}+a_{5,8})(e^{3,7}+e^{3,8})&\\
&[e_7,e_4]=2e_4,[e_7,e_5]=e_5&-a_{5,8}(e^{5,7}+e^{5,8})+a_{7,8}e_{7,8}&\\
&[e_7,e_6]=e_6,[e_8,e_2]=e_2&Conditions:a_{6,8}\neq 0,a_{4,8}\neq 0& \\
&[e_8,e_3]=e_3,[e_8,e_4]=e_4&&\\
&[e_8,e_5]=e_5,[e_8,e_6]=2e_6&&\\\hline

\N_{6,9}& [e_1,e_2]=e_3,[e_1,e_3]=e_4, &a_{3,8}(e^{1,2}-e^{3,7}-e^{3,8})-a_{2,8}e^{2,8}&No\\

&[e_1, e_5] = e_6, [e_2, e_3] = e_6 &+a_{4,8}(e^{1,3}-2e^{4,7}+e^{4,8})-a_{1,7}e^{1,7}&\\

&[e_7,e_1]=e_1,[e_7,e_3]=e_3,&+\frac{1}{2}a_{6,8}(e^{1,5}+e^{2,3}-e^{6,7}-e^{6,8})+&\\
&[e_7,e_4]=2e_4,[e_7,e_6]=e_6,&-a_{5,8}e^{5,8}-a_{7,8}e^{7,8}&\\
&[e_8,e_2]=e_2,[e_8,e_3]=e_3,&Conditions: a_{6,8}\neq 0,a_{4,8}\neq 0&\\
&[e_8,e_4]=e_4,[e_8,e_5]=2e_5,&&\\
&[e_8,e_6]=2e_6&&\\\hline


\N_{6,10}& [e_1,e_2]=e_3,[e_1,e_3]=e_5, &-a_{1,2}(e^{1,2}-2e^{3,7})-a_{1,8}(e^{1,8}*ae^{2,7})&No\\

&[e_1,e_4]=e_6,[e_2,e_3]=a.e_6, &a_{6,8}(ae^{1,3}+ae^{2,4}-3e^{5,7}-e^{6,8})&\\

&[e_2, e_4] = e_5,&+a_{5,8}(e^{1,4}+ae^{2,3}-e^{5,8}-3e^{6,7})&\\
&[e_7,e_1]=e_1,[e_7,e_2]=e_2,&-a_{2,8}(e^{1,7}+e^{2,8})-a_{5,8}(3e^{6,7}+e^{5,8})&\\
&[e_7,e_3]=2e_2,[e_7,e_4]=2e_4,&-a_{4,7}e^{4,7}-a_{7,8}e^{7,8}&\\
&[e_7,e_5]=3e_5,[e_7,e_6]=3e_6,&Conditions: a\neq 0, a a_{6,8}^{2}\neq a_{5,8}^{2}&\\\
&[e_8,e_1]=\frac{1}{a}e_2,[e_8,e_2]=e_1,&&\\
&[e_8,e_5]=e_6,[e_8,e_6]=\frac{1}{a}e_5,&&\\\hline

\N_{6,11}& [e_1,e_2]=e_3,[e_1,e_3]=e_4, &a_{3,8}(e^{1,2}-e^{3,7}-e^{3,8})-a_{1,7}e^{1,7}&Yes\\

&[e_1, e_4] = e_5, [e_2, e_3] = e_6, &									+a_{4,8}(e^{1,3}-2e^{4,7}-e^{4,8})-a_{2,8}e^{2,8}&\\

&[e_7,e_1]=e_1,[e_7,e_3]=e_3,&																			+a_{5,8}(e^{1,4}-3e^{5,7}-e^{5,8})-a_{7,8}e^{7,8}&\\
&[e_7,e_4]=2e_4,[e_7,e_5]=3e_5,&																		+a_{6,8}(\frac{1}{2}.e^{2,3}-\frac{1}{2}.e^{6,7}-e^{6,8})&\\
&[e_7,e_6]=e_6,[e_8,e_2]=e_2,&Conditions:a_{5,8}\neq 0  , a_{6,8}\neq 0& \\
&[e_8,e_3]=e_3,[e_8,e_4]=e_4,&&\\
&[e_8,e_5]=e_5,[e_8,e_6]=2e_6,&&\\\hline


\N_{6,12}& [e_1,e_3]=e_4,[e_1,e_4]=e_6, &\mbox{Dimension not even}&No\\

{\color{white} Nilradical }& [e_2, e_5] = e_6,&&\\

&[e_7,e_1]=e_1,[e_7,e_4]=e_4,&&\\
&[e_7,e_5]=2e_5,[e_7,e_6]=2e_6,&&{\color{white} Maximal }\\
&[e_8,e_2]=e_2,[e_8,e_5]=-e_5,&&\\
&[e_9,e_3]=e_3,[e_9,e_4]=e_4,&&\\
&[e_9,e_5]=e_5,[e_9,e_6]=e_6&&\\
\hline


\end{array}
\]
\[
\begin{array}{|c|l|c|c|} \hline 
\N_{6,13}&[e_1,e_2]=e_5,[e_1,e_3]=e_4,  &&No\\

{\color{white} Nilradical }& [e_1, e_4] = e_6, [e_2, e_5] = e_6&a_{5,8}(e^{1,2}-e^{5,7}-e^{5,8})-a_{1,7}e^{1,7}&\textcolor[rgb]{1,1,1}{Maximal}\\

&[e_7,e_1]=e_1,[e_7,e_3]=-e_3,&														+a_{4,8}(\frac{1}{2}e^{1,3}-e^{4,8})+a_{3,8}(1/2.e^{3,7}-e^{3,8})&\\
&[e_7,e_5]=e_5,[e_7,e_6]=e_6,&														+\frac{1}{2}a_{6,8}(e^{1,4}+e^{2,5}-e^{6,7}-2e^{6,8})&\\
&[e_8,e_2]=e_2,[e_8,e_3]=2e_3,&														-a_{2,8}e^{2,8}-a_{7,8}e^{7,8}&\\
&[e_8,e_4]=2e_4,[e_8,e_5]=e_5,&Conditions: a_{6,8} \neq 0 ,  a_{3,8} a_{6,8}\neq a_{4,8}^{2}&{\color{white} Maximal }\\\
&[e_8,e_6]=2e_6&&\\
\hline

\N_{6,14}&[e_1,e_3]=e_4,[e_1,e_4]=e_6, &\mbox{Dimension not even}&Yes\\

& [e_2, e_3] = e_5, [e_2, e_5] = a*e_6,&&\\

&[e_7,e_1]=e_1,[e_7,e_2]=e_2,&&\\
&[e_7,e_4]=e_4,[e_7,e_5]=e_5,&&\\
&[e_7,e_6]=2e_6,[e_8,e_1]=-\frac{1}{a}e_2,&&\\
&[e_8,e_2]=e_1,[e_8,e_4]=-\frac{1}{a}e_5,&&\\
&[e_8,e_5]=e_4,[e_9,e_3]=e_3,&&\\
&[e_9,e_4]=e_4,[e_9,e_5]=e_5,&&\\
&[e_9,e_6]=e_6&&\\
\hline

\N_{6,15}& [e_1,e_2]=e_3+e_5,[e_1,e_3]=e_4, &\mbox{Dimension not even}&No\\

&[e_1, e_4] = e_6, [e_2, e_5] = e_6 &&\\

&[e_7,e_1]=e_1,[e_7,e_2]=2e_2,&&\\
&[e_7,e_3]=3e_3,[e_7,e_4]=4e_4,&&\\
&[e_7,e_5]=3e_5,[e_7,e_6]=5e_6,&&\\\hline


\N_{6,16}&[e_1,e_3]=e_4,[e_1,e_4]=e_5,  &a_{4,8}(e^{1,3}-e^{4,7}-e^{4,8})-a_{1,7}e^{1,7}&No\\

& [e_1,e_5]=e_6,[e_2,e_3]=e_5, &					+a_{5,8}(e^{1,4}+e^{2,3}-2e^{5,7}-e^{5,8})&\\

&[e_2, e_4] = e_6,[e_7,e_1]=e_1&															+a_{6,8}(e^{1,5}+e^{2,4}-3e^{6,7}-e^{6,8})&\\
&[e_7,e_2]=2e_2,[e_7,e_4]=e_4,&																-a_{2,7}e^{2,7}-a_{3,8}e^{3,8}-a_{7,8}e^{7,8}&\\
&[e_7,e_5]=2e_5,[e_7,e_6]=3e_6,&																Conditions: &\\
&[e_8,e_3]=e_3,[e_8,e_4]=e_4,&a_{6,8}\neq 0,&\\
&[e_8,e_5]=e_5,[e_8,e_6]=e_6,&3 a_{3,8} a_{6,8}^{2}\neq 3 a_{4,8} a_{5,8} a_{6,8}+a_{5,8}^{3}&\\
\hline

\N_{6,17}& [e_1,e_2]=e_3,[e_1,e_3]=e_4, &a_{3,8}(e^{1,2}-e^{7,3}-e^{3,8})-a_{1,7}e^{1,7}&No\\

&[e_1, e_4] = e_6, [e_2, e_5] = e_6,&			+a_{6,8}(e^{1,4}+e^{2,5}-3e^{6,7}-e^{6,8})&\\

&[e_7,e_1]=e_1,[e_7,e_3]=e_3,&																		+a_{4,8}(e^{1,3}-2e^{4,7}-e^{4,8})-a_{2,8}e^{2,8}&\\
&[e_7,e_4]=2e_4,[e_7,e_5]=3e_5,&-a_{5,7}e^{5,7}-a_{7,8}e^{7,8}&\\
&[e_7,e_6]=3e_6,[e_8,e_2]=e_2,&																Conditions:a_{6,8}\neq 0,& \\
&[e_8,e_3]=e_3,[e_8,e_4]=e_4,&2 a_{3,8} a_{6,8}\neq a_{4,8}&\\
&[e_8,e_6]=e_6&&\\
\hline

\N_{6,18}& [e_1,e_2]=e_3,[e_1,e_3]=e_4, &\mbox{Never admits a symplectic}&Yes\\

&[e_1,e_4]=e_6,[e_2,e_3]=e_5,&\mbox{structure}&\\

&[e_2, e_5] = ae_6&&\\
&[e_7,e_1]=e_1,[e_7,e_2]=e_2,&&\\
&[e_7,e_3]=2e_3,[e_7,e_4]=3e_4,&&\\
&[e_7,e_5]=3e_5,[e_7,e_6]=4e_6,&&\\
&[e_8,e_1]=-\frac{1}{a}e_2,[e_8,e_2]=e_1,&&\\
&[e_8,e_4]=-\frac{1}{a}e_5,[e_8,e_5]=e_4,&&\\\hline


\end{array}
\]
\[
\begin{array}{|c|l|c|c|} \hline 

\N_{6,19}& [e_1,e_2]=e_3,[e_1,e_3]=e_4, &\mbox{Dimension not even}&No\\

& [e_1,e_4]=e_5,[e_1,e_5]=e_6,&&\\
&[e_2, e_3] = e_6,&&\\
&[e_7,e_1]=e_1,[e_7,e_2]=3e_2,&&\\
&[e_7,e_3]=4e_3,[e_7,e_4]=5e_4,&&\\
&[e_7,e_5]=6e_5,[e_7,e_6]=7e_6,&&\\
\hline

\N_{6,20}& [e_1,e_2]=e_3,[e_1,e_3]=e_4, &\mbox{Dimension not even}&No\\

&[e_1,e_4]=e_5,[e_1,e_5]=e_6, &&\\

&[e_2, e_3] = e_5, [e_2, e_4] = e_6&&\\
&[e_7,e_1]=e_1,[e_7,e_2]=2e_2,&&\\
&[e_7,e_3]=3e_3,[e_7,e_4]=4e_4,&&\\
&[e_7,e_5]=5e_5,[e_7,e_6]=6e_6&&\\
\hline

\N_{6,21}&[e_1,e_2]=e_3,[e_1,e_5]=e_6,  &a_{3,8}(e^{1,2}-e^{3,7}-e^{3,8})-a_{1,7}e^{1,7}&No\\

& [e_2, e_3] = e_4, [e_2, e_4] = e_5,&\frac{1}{3}a_{6,8}(e^{1,5}+e^{3,4}-2e^{6,7}-3e^{6,8})&\\

&[e_3, e_4] = e_6,&\frac{1}{2}a_{4,8}(e^{2,3}-e^{4,7}-e^{4,8})-a_{2,8}e^{2,8}&\\
&[e_7,e_1]=e_1,[e_7,e_3]=e_3,&\frac{1}{3}a_{5,8}(e^{2,4}-e^{5,7}-e^{5,8})-a_{7,8}e^{7,8}&\\
&[e_7,e_4]=e_4,[e_7,e_5]=e_5,&Conditions:  a_{2,8} a_{6,8}\neq 8 a_{3,8} a_{5,8}+3 a_{4,8}^{2},&\\\
&[e_7,e_6]=2e_6,[e_8,e_2]=e_2,&a_{6,8}\neq 0&\\
&[e_8,e_3]=e_3,[e_8,e_4]=2e_4,&&\\
&[e_8,e_5]=3e_5,[e_8,e_6]=3e_6,&&\\
\hline

\N_{6,22}& [e_1, e_2] = e_3, [e_1, e_3] = e_5, &\mbox{Dimension not even}&No\\

&[e_1,e_5]=e_6,[e_2,e_3]=e_4, &&\textcolor[rgb]{1,1,1}{Maximal}\\

&[e_2, e_4] = e_5, [e_3, e_4] = e_6&&\\
&[e_7,e_1]=e_1,[e_7,e_2]=\frac{1}{2}e_2,&&\\
&[e_7,e_3]=\frac{3}{2}e_3,[e_7,e_4]=2e_4,&&\\
&[e_7,e_5]=\frac{5}{2}e_5,[e_7,e_6]=\frac{7}{2}e_6&&\\
\hline

\end{array}
\]


\begin{thebibliography}{40}


\bibitem{A-M}
\newblock  A\"\i t Aissa, T. and  Mansouri, M. W. ,
\newblock \emph{Eight-dimensional symplectic non-solvable Lie algebras}, \newblock  Journal of  Communications in Algebra,  52,  3 (2024) 1196-1218.	
	
\bibitem{B-C}
\newblock Baues, O., Cortés, V. ,
\newblock \emph{Symplectic Lie Groups I-III.}, 
\newblock Astérisque 379. Paris: Socièté Mathématique de France (SMF), vi, 90 p. 



\bibitem{S}
\newblock Campoamor-Stursberg, R.,
  \newblock \emph{Symplectic Forms on Six-dimensional Real Solvable Lie Algebras I.},  
 \newblock  Algebra Colloq.  16, (2009) 253-266.

	\bibitem{C}
\newblock 	B. Chu,
\newblock \emph{Symplectic homogeneous spaces},
\newblock Trans. Amer. Math. Soc. 197 (1974) 145-159.




\bibitem{F}
\newblock Fischer, M.,    
  \newblock \emph{Symplectic Lie algebras with degenerate center.},  
\newblock Journal of Algebra 521, (2019) 257-283.



\bibitem{G-J-K}
\newblock  Gómez, J.R., Jiménez-Merchán, A., Khakimdjanov, Y.,    
 \newblock \emph{Symplectic structures on the filiform Lie algebras.},  
\newblock Journal of Pure and Applied Algebra 156, (2001) 15-31.
	

\bibitem{J}
\newblock Jacobson, N.,   
 \newblock \emph{Lie Algebras.},  
\newblock John Wiley  Sons Inc. (1962)



\bibitem{K-G-M}
\newblock Khakimdjanov, Yu., Goze, M., Medina, A.,  
 \newblock \emph{Symplectic or contact structures on Lie groups.},  
\newblock Differential Geometry and its Applications 21, (2004) 41-54.




\bibitem{M1}
\newblock Meng, D.J. ,
\newblock \emph{ On complete Lie algebras,}
 \newblock Act. Sci. Nat. Univ. Nankai, 2, (1985) 9-19.

\bibitem{M2}
\newblock Meng, D.J., 
\newblock 
\newblock \emph{ The uniqueness of the decomposition of the complete Lie algebras,}
\newblock ibid 3, (1990) 23-26.


\bibitem{M4}
\newblock Meng, D.J., 
  \newblock \emph{ Complete lie algebras and heisenberg algebras.},  
\newblock Communications in Algebra 22, (1994a) 5509-5524. 



\bibitem{M3}
\newblock Meng, D.J.,    
 \newblock \emph{Some results on complete lie algebras.},  
\newblock Communications in Algebra 22, (1994b) 5457-5507.


\bibitem{M-S}
\newblock Meng, D.J., Sheng, Z.L.,
 \newblock \emph{Solvable complete lie algebras. I},  
\newblock Communications in Algebra 24, (1996) 4181-4197.



\bibitem{M-W}
\newblock Meng, D.J., Wang, S.P.,    
  \newblock \emph{On the Construction of Complete Lie Algebras.},  
\newblock Journal of Algebra 176, (1995) 621-637.



\bibitem{O}
\newblock Ovando, G., 
  \newblock \emph{Four Dimensional Symplectic Lie Algebras.},  
\newblock Contributions to Algebra and Geometry, 47 .No. 2, (2006) 419-434

\bibitem{P-Winter}
\newblock Patera, J., Sharp, R.T., Winternitz, P., Zassenhaus, H., 
  \newblock \emph{Invariants of real low dimension Lie algebras.},  
\newblock Journal of Mathematical Physics 17, (2008) 986-994.
   

\bibitem{Sa}
\newblock Santharoubane, L.J.,    
  \newblock \emph{KAC-Moody Lie Algebras and the Classification of Nilpotent Lie Algebras of Maximal Rank.},  
\newblock Canadian Journal of Mathematics 34, (1982) 1215-1239.




			
\end{thebibliography}
\end{document}